\documentclass[10pt,a4paper]{amsart}
          
\usepackage{mystyle}

\title{ Chow groups and pseudoeffective cones of complexity one \textit{T}-varieties}
\author{Bernt Ivar Utst\o l N\o dland}

\begin{document} 
\begin{abstract}
We show that the pseudoeffective cone of $k$-cycles on a complete complexity one $T$-variety is rational polyhedral for any $k$, generated by classes of $T$-invariant subvarieties. When $X$ is also rational, we give a presentation of the Chow groups of $X$ in terms of generators and relations, coming from the combinatorial data defining $X$ as a $T$-variety. 
\end{abstract}

\maketitle

\section{Introduction}

A $T$-variety is a normal complex algebraic variety $X$ with an effective action of an algebraic torus $T$. The complexity of a $T$-variety is defined as $\dim X -\dim T$, thus $T$-varieties of complexity zero correspond to the  toric varieties. For toric varieties there is a well-known correspondence between the geometry of a variety and combinatorial data coming from the $T$-action. There has in recent years been developed a similar quasi-combinatorial language for describing $T$-varieties of higher complexity,  starting with Altmann and Hausen's paper \cite{AH}.  Following this, there have been many papers studying the geometrical and combinatorial properties of $T$-varieties (see for instance \cite{AHS},\cite{AP},\cite{HS},\cite{IS},\cite{Topology},\cite{PS} and the references in \cite{TVAR}).

Here we will study algebraic cycles on T-varieties of complexity one. Our first main result is a description of the cones of effective cycles:

\begin{theorem} \label{theorem:rationalPolyhedral1}
The pseudoeffective cones $\overline{\Eff}_k(X)$ of a complete $T$-variety $X$ of complexity one are rational polyhedral, generated by classes of invariant subvarieties.
\end{theorem}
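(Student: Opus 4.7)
The plan is to establish two statements: (a) every effective class in $N_k(X)$ is represented by a $T$-invariant effective cycle, and (b) the classes of $T$-invariant subvarieties span a rational polyhedral cone in $N_k(X)$. Granting both, one has inclusions $\Eff_k(X) \subseteq \mathrm{Cone}(T\text{-invariant subvarieties}) \subseteq \Eff_k(X)$, and since the middle cone is already closed, its closure $\overline{\Eff}_k(X)$ equals this rational polyhedral cone.

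For part (a), I would apply Borel's fixed point theorem in the Chow variety. Let $Z$ be an effective $k$-cycle of some degree $d$ with respect to a polarization on $X$, realized as a closed point $[Z]$ of the projective Chow variety $\mathrm{Chow}_{k,d}(X)$, on which $T$ acts algebraically. The orbit closure $\overline{T \cdot [Z]}$ is complete, connected and $T$-invariant, so by Borel it contains a $T$-fixed point $[Z']$; such a point corresponds to a $T$-invariant effective cycle $Z'$. Since $[Z]$ and $[Z']$ lie in the same connected component of $\mathrm{Chow}_{k,d}(X)$, the cycles $Z$ and $Z'$ are algebraically, and hence numerically, equivalent, so $[Z] = [Z']$ in $N_k(X)$.

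For part (b), I would exploit the complexity one structure via the rational quotient $\pi: X \dashrightarrow C$ onto a smooth projective curve, together with the Altmann--Hausen description of $X$ by a divisorial fan on $C$. The $T$-invariant subvarieties of $X$ should fall into finitely many combinatorial \emph{types}: each type corresponds to a piece of combinatorial data (a cone, a polyhedron, etc.) together with a base point in an open subset of $C$ over which the data remains valid. As the base point varies, the corresponding subvarieties sweep out a flat family, and so are mutually algebraically equivalent. Hence each type contributes a single class to $N_k(X)$, giving a finite generating set for the cone of $T$-invariant classes.

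The main obstacle is part (b). One must give a precise combinatorial inventory of $T$-invariant subvarieties, distinguishing \emph{horizontal} ones (which dominate $C$) from \emph{vertical} ones (which map to a single point of $C$, and may appear only over the finitely many special points where the divisorial fan is non-trivial), and then verify via a flat family argument that each combinatorial type contributes exactly one numerical class. This local analysis near the special fibers of $\pi$, together with the global bookkeeping of how horizontal and vertical pieces interact, is where most of the technical work lies.
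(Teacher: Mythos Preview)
Your outline is correct, but the route in part (b) differs from the paper's in a way worth noting.

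For part (a), your Borel-fixed-point argument on the Chow variety and the paper's iterated one-parameter-subgroup limits are two packagings of the same idea: both produce a $T$-invariant effective cycle algebraically (hence numerically) equivalent to the given one. No substantive difference there.

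The divergence is in part (b). You propose to write down a complete combinatorial inventory of $T$-invariant subvarieties and argue directly that each combinatorial type contributes a single numerical class. The paper instead avoids compiling that inventory by a two-step reduction. First it replaces $X$ by the contraction-free model $\widetilde{X}$ (with a genuine morphism $g:\widetilde{X}\to C$), using that $r_*$ maps $\overline{\Eff}_k(\widetilde{X})$ onto $\overline{\Eff}_k(X)$. Then, on $\widetilde{X}$, it runs an \emph{induction on the dimension of the $T$-variety}: an irreducible $T$-invariant cycle is either vertical, hence lies in a fiber (which is toric, and the general fibers are all algebraically equivalent, exactly as you say), or horizontal, in which case a one-line dimension count forces it into the boundary $B=\bigcup_{\rho\in\Sigma(1)} B_\rho$. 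Each $B_\rho$ is again a complete complexity-one $T$-variety of strictly smaller dimension, so the induction closes.

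The inductive argument buys finiteness of the generating set for free, without ever listing the invariant subvarieties; the explicit list is then derived afterwards as a corollary. Your approach front-loads that classification, which is more work but yields the generators as part of the proof rather than a posteriori. Both are valid; the paper's is shorter, yours is more constructive. If you pursue your route, the one point to make precise is why the \emph{horizontal} invariant subvarieties form a finite set up to numerical equivalence---this is exactly where the paper's observation that they must lie in $B$, together with induction, does the work that your ``combinatorial types'' sketch leaves implicit.
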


This generalizes Scott's results on the pseudoeffective cone of curves on a $T$-variety \cite{T-Curves}. There are not many examples where all pseudoeffective cones of cycles are known, this result gives a large class of examples where these are rational polyhedral. When $X$ is rational, it is known that $X$ is a Mori dream space \cite{HS}, so the statement was previously known for the cone of curves and effective divisors. However, the result applies also in the non-rational cases, showing that while the $T$-varieties $X$ are not Mori dream spaces, their cones of curves and divisors are still rational polyhedral. Moreover, even in the case $X$ is rational, and thus Mori dream, it is not a priori clear that the effective cones of cycles of intermediate dimensions should be rational polyhedral, as was shown by \cite[Example 6.10]{DELV}.

Our second main result gives a presentation for the Chow groups of $X$, in the case $X$ is a rational complete complexity one $T$-variety. The result is inspired by two different well-known results. First of all, for toric varieties, Fulton and Sturmfels showed that invariant subvarieties generate the Chow groups, and moreover they described the relations between these generators \cite[Proposition 2]{FS}. On the other hand, for a rational complete $T$-variety of complexity one, Altmann and Petersen give an analogous short exact sequence describing its Picard group \cite[Corollary 2.3]{AP}. These two results give some hints to how the Chow groups of a complexity one $T$-variety might look. Our second main result gives a complete description of these.

To explain the result, we first recall some basic facts about $T$-varieties. If $X$ is a complete rational $T$-variety of complexity one it comes equipped with a rational quotient map, that is, a rational map $f:X \dashrightarrow \p^1$, and another complete rational $T$-variety of complexity one, $\widetilde{X}$, with a $T$-equivariant blow-up map $r:\widetilde{X} \to X$ which resolves $f$; thus there is a map $g: \widetilde{X} \to \p^1$ such that $f \circ r = g$ whenever defined. Following \cite{IS}, we have that $X$ corresponds to  a fan $\Sigma$ giving the general toric fiber of $g$, as well as finitely many special fibers which correspond to polyhedral complexes with tailfan $\Sigma$. We denote the set of points in $\p^1$ such that the fiber is not general by $P$. In addition we need to keep track of the data of which cones corresponds to varieties contracted by $r$ (see Section \ref{section:preliminaries} for more details).

Letting the dimension of $X$ be $n+1$ and  $\Sigma$ be the fan describing the general fiber of $g$ (which is the toric variety $X_\Sigma)$ we define, for a non-negative integer $k$, the following sets:

$R_k =$ Cones of dimension $n+1-k$ corresponding to subvarieties not contracted by $r$.

$V_k =$ Faces of dimension $n-k$ of special fibers such that the tailcone corresponds to a subvariety not contracted by $r$.

$T_k =$ Cones of dimension $n-k$ corresponding to subvarieties contracted by $r$.

\begin{theorem}
For a complete rational T-variety $X$ of complexity one there is for any $0 \leq k \leq \dim X$ an exact sequence
\[  \bigoplus_{F \in V_{k+1}} M(F) \bigoplus_{\tau \in R_{k+1}} (M(\tau) \oplus \Z^P/\Z) \bigoplus_{\tau \in T_{k+1}} M(\tau) \to  \Z^{V_{k}} \oplus \Z^{R_{k}} \oplus \Z^{T_{k}} \to A_k(X) \to 0 \]
where for a rational polyhedral cone $\tau$ in $M_\Q, M(\tau) = \tau^\perp \cap M$ and $M(F)$ is the character lattice of the toric variety $F$ corresponds to.
\end{theorem}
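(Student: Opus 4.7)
The strategy follows the Fulton--Sturmfels template for toric varieties \cite{FS}, extended to complexity one by incorporating the data of the $\p^1$-quotient in the spirit of Altmann--Petersen \cite{AP}. I would proceed in three steps.

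First, for surjectivity, the map $\Z^{V_k}\oplus\Z^{R_k}\oplus\Z^{T_k}\to A_k(X)$ sends each combinatorial index to the class of the corresponding irreducible $T$-invariant $k$-subvariety, using the dictionary between cones and faces of the divisorial fan data and invariant subvarieties from Section \ref{section:preliminaries}. Surjectivity amounts to the assertion that invariant $k$-cycles generate $A_k(X)$. I would obtain this from Theorem \ref{theorem:rationalPolyhedral1} together with the fact that $A_k(X)$ is generated by effective classes, or alternatively by applying the localization exact sequence to the natural $T$-invariant stratification of $X$ whose open strata are $T$-bundles over Zariski opens of $\p^1$, with easily computed Chow groups.

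Second, I would define the relation map one summand at a time, attaching to each invariant $(k+1)$-subvariety $W$ a group of rational functions on $W$ whose principal divisors give relations in $A_k(X)$. For $F\in V_{k+1}$ (a vertical toric stratum) and for $\tau\in T_{k+1}$ (a contracted toric stratum), the summand $M(F)$ or $M(\tau)$ is the character lattice of $W$, and a character $u$ maps to $\mathrm{div}(\chi^u)$ decomposed into invariant $k$-cycles by the toric formula of \cite[Proposition 2]{FS}. For $\tau\in R_{k+1}$, $W=V(\tau)$ is itself a complete complexity one $T$-variety fibered over $\p^1$; here $M(\tau)$ contributes the divisors of torus characters, while $\Z^P/\Z$ contributes the divisors of rational functions pulled back from $\p^1$ with zeros and poles concentrated at $P$, modulo nonzero constants. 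In each case the divisor decomposes as an integral combination of invariant $k$-cycles by the divisorial fan description of $W$, so the composition to $A_k(X)$ vanishes since these are principal divisors inside $X$.

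Third --- and this is the main obstacle --- I would show exactness in the middle: every relation in $A_k(X)$ among classes of $T$-invariant cycles arises from one of the above $T$-equivariant principal divisors. The standard equivariantization technique applies: given a $(k+1)$-subvariety $W'\subseteq X$ and $f\in k(W')^\ast$ whose divisor is a $T$-invariant cycle, one degenerates $W'$ to its flat limit $W$ under a generic one-parameter subgroup $\lambda:\mathbb{G}_m\to T$; the limit $W$ is $T$-invariant and the transported function $f_\lambda$ on $W$ is $T$-semi-invariant, with $\mathrm{div}(f)-\mathrm{div}(f_\lambda)$ itself a sum of principal divisors on $T$-invariant subvarieties. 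Iterating reduces to the case where $W$ is a $T$-invariant irreducible $(k+1)$-subvariety and $f$ is a product of a character of the acting torus and (in the horizontal case) a pullback from $\p^1$; these are precisely the summands in the statement. The delicate bookkeeping step is to match the output of this equivariantization with the combinatorial indexing by $V_{k+1}, R_{k+1}, T_{k+1}$ and to verify that no further independent relations arise, particularly in the horizontal case $\tau\in R_{k+1}$ where both $M(\tau)$ and $\Z^P/\Z$ intervene; the Altmann--Petersen sequence \cite[Corollary 2.3]{AP}, which is the top-degree case $k=\dim X-1$, both suggests the shape of the answer and provides the base case.
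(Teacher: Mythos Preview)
Your three-step outline matches the paper's structure, and your reduction to $T$-eigenfunctions on $T$-invariant $(k+1)$-subvarieties is correct in spirit; the paper achieves this reduction in one line by citing the isomorphism $A_k^T(X)\cong A_k(X)$ of Fulton--MacPherson--Sottile--Sturmfels \cite[Theorem~1]{FMSS}, rather than rerunning the one-parameter-subgroup degeneration argument by hand as you propose.

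There is, however, a genuine gap in your third step. You assert that after equivariantization the pair $(W,f)$ lands in ``precisely the summands in the statement'', i.e.\ is indexed by $V_{k+1}\cup R_{k+1}\cup T_{k+1}$. This is false: the $T$-invariant irreducible $(k+1)$-subvarieties of $X$ also include the vertical strata $Z_{q,\sigma}$ lying over \emph{general} points $q\in\p^1\setminus P$ (for $\sigma\in\Sigma(n-k-1)$ with $\sigma\notin K$), and these are not recorded in $V_{k+1}$, which by definition only sees the finitely many special fibers over $P$. So after invoking FMSS (or your degeneration), you still have infinitely many sources of relations, and the actual content of the theorem is that the relations coming from general fibers are redundant.

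The paper handles this as follows: it adjoins one general point $q$ to $P$, forming $P'=P\cup\{q\}$, and then shows that both the new generator $Z_{q,\sigma}$ and the new relations (from $M(\sigma)$ at the new vertical stratum, and from the extra $\Z$ in $\Z^{P'}/\Z$) are already forced by the sequence over $P$. The nontrivial part is the relation coming from $m\in M(\tau)$ for $\tau\in\Sigma(n-k-1)\cap K$ at the general fiber: one must exhibit it as a sum of relations indexed by the faces $G$ of $S_\infty$ with $\tail G=\tau$, and this requires a lattice-index computation (the paper's Lemma~\ref{lemma:latticeIndex}, $\mu(G)v_{G,H}=\mu(H)v_{\tau,\sigma}$) together with a cancellation argument over the compact edges of $S_\infty/\tau$. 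Your proposal does not anticipate this step, and without it the exactness in the middle is not established.
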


The maps will be described below. For $k=n$ this coincides with the exact sequence of Altmann and Petersen. The above results also generalizes the results of Laface, Liendo and Moraga \cite{Topology}, where they give a presentation of the rational Chow ring of a complete complexity one $T$-variety which is contraction free, that is, when $X=\widetilde{X}$. Being contraction-free is however quite restrictive, for instance their rational Chow ring is generated by divisors \cite[Lemma 4.1]{Topology}, which is not true in general. We illustrate our results by studying examples such as toric downgrades (meaning we only remember the action of a codimension one torus on a toric variety), projectivizations of rank two toric vector bundles and the Grassmannian $\Gr(2,4)$, none of which (in general) are contraction free. \newline

\noindent
{\bf Acknowledgements.}
I am grateful to John Christian Ottem for suggesting I work on this problem and for numerous helpful conversations. Parts of this work was done while staying at IMPAN at the Simons semester ``Varieties: Arithmetic and Transformations'' September-October 2018 in Warsaw. In particular I wish to thank Nathan Ilten, Alvaro Liendo and Hendrik S\"{u}ss for answering many of my questions on $T$-varieties while I was there. I am grateful for helpful conversations with Corey Harris and Kathl\'en Kohn and for comments on a preliminary version of this paper by Ragni Piene and Hendrik S\"uss. This work was partially supported by the project Pure Mathematics in Norway, funded by Bergen Research Foundation and Troms{\o} Research Foundation and by the grant 346300 for IMPAN from the Simons Foundation and the matching 2015-2019 Polish MNiSW fund.

\section{Preliminaries on $T$-varieties} \label{section:preliminaries}

The papers \cite{AH} and \cite{AHS} give a general framework for describing $T$-varieties of any complexity, we briefly recall the set-up. Denote by $T \simeq (\C^\ast)^n$ a torus of dimension $n$ and let $M$ and $N$ denote the lattices of characters and one-parameter subgroups of $T$, respectively.

Recall that any polyhedron $\Delta$ can be decomposed as a Minkowski sum $\sigma+P$, where $\sigma$ is a unique polyhedral cone and $P$ is a polytope. Fixing a polyhedral cone $\sigma \subset N_\Q$, we consider the semigroup
\[ \Pol_\Q^+(N,\sigma) = \{ \Delta \subset N_\Q | \Delta \text{ is a polyhedron with tailcone }  \sigma \}\] 
We also allow $\emptyset$ as an element of $\Pol_\Q^+(N,\sigma)$. Let $Y$ be a normal and semiprojective variety and let $\CDiv(Y)$ denote the group of Cartier-divisors on $Y$. We consider  
\[ \D = \sum_Z \Delta_Z \otimes Z \in \Pol_\Q^+(N,\sigma) \otimes \CDiv(Y). \]
For $u \in \sigma^\vee \cap M$, we may consider the evaluation
\[ \D(u) = \sum_Z \min \langle \Delta_Z,u \rangle Z \in \CDiv_\Q(Y)\]
We call $\D$ a p-divisor on $(Y,N)$ if $\D(u)$ is semiample for all $u \in \sigma^\vee \cap M$, as well as big for $u$ in the interior of $\sigma^\vee \cap M$. To a p-divisor $\D$ we can associate the sheaf of rings $\OO_Y(\D) = \sum_{u \in \sigma^\vee \cap M} \OO_Y(\D(u))$. Then $X = \Spec \Gamma(Y,\OO_Y(\D))$ is an affine T-variety of complexity $\dim Y$. Also $\widetilde{X} = \Spec_Y \OO_Y(\D)$ is T-variety of complexity $\dim Y$ and there is an equivariant map $r: \widetilde{X} \to X$.

Altmann and Hausen \cite{AH} shows that  any affine T-variety arises from a p-divisor in this way.

If $\D$ and $\D'$ both are p-divisors on $(Y,N)$ we define their intersection $\D \cap \D'$ as having coefficient $\Delta_Z \cap \Delta_Z'$ on $Z$. We say that $\D \subset \D'$ if $\Delta_Z \subset \Delta_Z'$ for all $Z$. In that case we say that $\D$ is a face of $\D'$ if the induced map $X(\D) \to X(\D')$ is an open embedding; there is a technical condition \cite[Proposition 3.4]{AHS} for checking this.

\begin{definition}
A finite set $\mathcal{S}$ of p-divisors on $(Y,N)$ is called a divisorial fan if the intersection of any two p-divisors is a common face of both and $\mathcal{S}$ is closed under inersections.
\end{definition}

The condition on the intersections comes from the fact that one can glue the varieties $X(\D), X(\D')$ along the open set $X(\D \cap \D')$. Then Altmann, Hausen and S\"{u}ss \cite{AHS} show that any $T$-variety arises from a divisorial fan in this way. 

While the arguments in this paper are mostly geometric, the perspective of a divisorial fan will be useful. Also, in complexity one, there is a different perspective, due to Ilten and S\"{u}ss \cite{IS}, which has the advantage of avoiding the technical condition about the open embeddings $X(\D) \to X(\D')$. We will  alternate between these perspectives, depending on what is most convenient.

We now specialize to the case where $Y$ is a curve, denoted by $C$, which we can assume is a smooth curve \cite[Corollary 8.12]{AH}. Also we generally denote the divisor $Z$ by $p$, since divisors on curves correspond to sums of points. For a p-divisor $\D$ on $C$ we define its degree as $\deg \D = \sum_p \Delta_p$ where we by sum of polyhedra mean Minkowski sum.

\begin{definition} \label{definition:markedfansy}
A \emph{marked fansy divisor} on $C$ is a formal sum $\Xi = \sum S_p \otimes [p]$ together with a fan $\Sigma \subset N_\Q$ and a subset $K \subset \Sigma$ such that 
\begin{enumerate}
\item Each $S_p$ is a complete polyhedral subdivison of $N_\Q$ such that $\tail (S_p) = \Sigma$ for all $p$.
\item If $\sigma \in K$ has full dimension then  $D^\sigma = \sum D^\sigma_p \otimes [p]$ is a p-divisor, where $D^\sigma_p$ is the polyhedron in $S_p$ with $\tail(D^\sigma_p)=\sigma$. 
\item for a full dimensional cone $\sigma \in K$ and a face $\tau$ of $\sigma$, $\tau \in K$ if and only if $\deg D^\sigma \cap \tau \neq \emptyset$. \label{nr3}
\item If $\tau$ is a face of $\sigma$, then $\tau \in K$ implies that $\sigma \in K$.
\end{enumerate}
\end{definition}

The subsets $\deg D^\sigma$ glue to a subset $\deg \Xi \subset N_\Q$.

Then \cite{IS} shows that any complete T-variety of complexity one corresponds to a marked fansy divisor. In other words, in this case we only need to remember the polyhedral subdivisions of the fibers as well as the set $K$ which records which subvarieties are contracted by $r$.

Given a complete rational complexity one $T$-variety defined by a divisorial fan $\mathcal{S}$, one can associate a marked fansy divisor by taking the polyhedral subdivisions given by $\mathcal{S}$ and marking all cones $\sigma$ such that there exists $\D \in \mathcal{S}$ with tailcone $\sigma$ and such that no coefficients of $\D$ equals $\emptyset$. The variety $\widetilde{X}$ is given as a marked fansy divisor by the same subdivisions as for $X$ but with $K=\emptyset$.

The intuition here is the following: For any point $p \in Y$ the fiber of $g$ is given by the polyhedral subdivison at the point $p$. The fan $\Sigma$ defines a toric variety which is the general fiber, but at some points there might be other fibers. Nevertheless these fibers are all unions of toric varieties. The T-action on $\widetilde{X}$ restricted to a fiber of $g$ is just the T-action on the fiber as a union of toric varieties.

\begin{figure}  
\begin{subfigure}[t]{.33\linewidth}
\begin{tikzpicture} [scale=0.8]
\node[draw,circle,inner sep=1pt,fill] at (0,0) {};
\draw [-] (0,0) -- (1,0);
\draw [-] (0,0) -- (-2,2);
\draw [-] (1,0) -- (3,2);
\draw [-] (1,0) -- (3,0);
\draw [-] (1,0) -- (1,-2);
\draw [-] (0,0) -- (0,-2);
\draw [-] (0,0) -- (-2,0);
\draw [-] (0,0) -- (0,2);
\end{tikzpicture}
\subcaption*{Fiber over $0$}
\end{subfigure}%
\begin{subfigure}[t]{.33\linewidth}
\begin{tikzpicture} [scale=0.8]
\node[draw,circle,inner sep=1pt,fill] at (0,0) {};
\draw [-] (0,0) -- (2,0);
\draw [-] (0,1) -- (2,3);
\draw [-] (0,1) -- (0,3);
\draw [-] (0,1) -- (-2,3);
\draw [-] (0,0) -- (0,-2);
\draw [-] (0,0) -- (-2,0);
\draw [-] (0,0) -- (0,1);
\end{tikzpicture}
\subcaption*{Fiber over $1$}
\end{subfigure}%
\begin{subfigure}[t]{.33\linewidth}
\begin{tikzpicture} [scale=0.8]
\node[draw,circle,inner sep=1pt,fill] at (0,0) {};
\draw [-] (0,0) -- (2,0);
\draw [-] (0,0) -- (2,2);
\draw [-] (-1,0) -- (-3,2);
\draw [-] (-1,0) -- (-3,0);
\draw [-] (-1,0) -- (-1,-2);
\draw [-] (0,0) -- (0,-2);
\draw [-] (0,0) -- (-1,0);
\draw [-] (0,0) -- (0,2);
\end{tikzpicture}
\subcaption*{Fiber over $\infty$}
\end{subfigure}
\caption{Polyhedral subdivisons defining a $T$-variety  $X$ of dimension three. }
\label{figure:bundloeoverp1p1}
\end{figure}
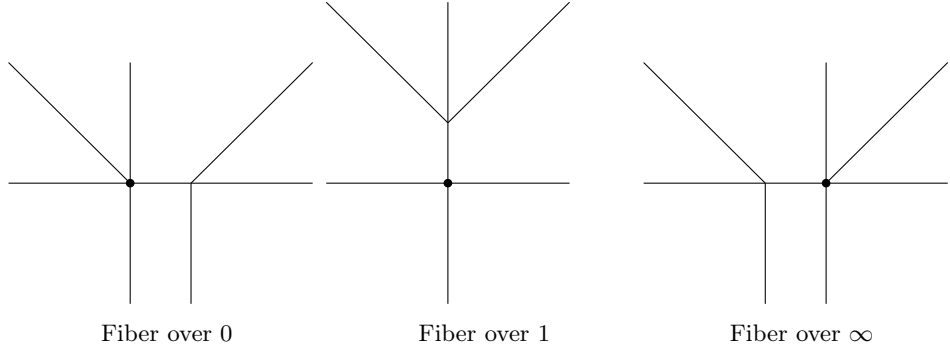
\begin{figure}  
\begin{tikzpicture} 
\draw [lightgray, fill=lightgray] (1,1) -- (-1,1) -- (-1,-2) -- (1,-2) -- (1,1) ;
\node[draw,circle,inner sep=1pt,fill] at (0,0) {};
\draw [dotted] (0,0) -- (0,-2);
\draw [-] (1,0) -- (3,0);
\draw [-] (1,1) -- (3,3);
\draw [-] (0,1) -- (0,3);
\draw [-] (-1,1) -- (-3,3);
\draw [-] (-1,0) -- (-3,0);
\draw [-] (-1,0) -- (-1,-2);
\draw [-] (1,0) -- (1,-2);
\draw [-] (1,1) -- (-1,1);
\draw [-] (1,1) -- (1,0);
\draw [-] (-1,1) -- (-1,0);
\end{tikzpicture}
\caption{ Everything except the gray area is $\deg \Xi$. }
\label{figure:degbundlep1p1}
\end{figure}
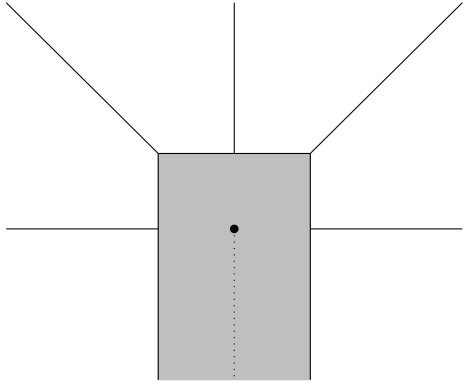

\begin{example} \label{example:bunldep1p1}
Figure \ref{figure:bundloeoverp1p1} shows polyhedral subdivisions for a complexity one $T$-variety $X$ of dimension three, with three special fibers. Its tailfan is the fan $\Sigma$ of the blow up of $\p^1 \times \p^1$ in two points contained in $\p^1 \times 0$. The subdivisions alone do not define $X$, we also need to specify the subset $K$. If we for instance choose that $K$ contains all maximal cones of $\Sigma$, we get  from Figure \ref{figure:degbundlep1p1} that all rays except the one generated by $(0,-1)$ are also in $K$.

This choice of $K$ corresponds to the projectivization of a rank two non-split toric vector bundle over $\p^1 \times \p^1$, see Section \ref{section:tvb}. 
\end{example}

\section{Subvarieties of T-varieties of complexity one} \label{section:subvarieties}

Given a fan $\Sigma$, we denote by $X_\Sigma$ the associated toric variety, by $B_\Sigma$ the torus-invariant boundary of $X_\Sigma$ and by $T_\Sigma$ the torus. For $\sigma \in \Sigma$, let $V(\sigma)$ denote the T-invariant subvariety of $X_\Sigma$ of codimension $\dim \sigma$ associated to $\sigma$. It is a toric variety with torus $T(\sigma)$, whose lattice of one-parameter subgroups is $N(\sigma) = N/\sigma \cap N$.

Fix a complete T-variety of complexity one $X$ with tailfan $\Sigma$. On the variety $\widetilde{X}$ there are various subvarieties arising from the combinatorial structure defining it. For a  point $p \in C$ and a face $F$ of the polyhedral complex $S_p$ defining the fiber $g^{-1}(y)$ there is the T-orbit $\orb(p,F)$ of dimension $\codim F$, the closure is denoted by $Z_{y,F}=\ol{\orb}(y,F)$.

We denote the generic point of $C$ by $\eta$. For any cone $\sigma \in \Sigma$ there is a T-invariant subvariety $B_\sigma$ which is given by the closure of $\orb(\eta,\rho)$, in other words $B_\sigma$ dominates $Y$ and in the general fiber is given by the subvariety $V(\sigma) \subset X_\Sigma$. We define $B= \cup_{\rho \in \Sigma(1)} B_\rho$. Then $B$ is a finite union of divisors on $\widetilde{X}$, each of which themselves is a T-variety of complexity one \cite[Proof of Proposition 4.12]{HS}. This fact will be important in the proof of Theorem \ref{theorem:rationalPolyhedral2}.

Fix any ray $\rho$ . Locally, $\widetilde{X}$ is given by a p-divisor $D = \sum_Z D_Z Z$ with tailcone $\sigma$. For a ray $\rho$, let $\pi:N_\Q \to N_\Q/ \Q \rho$ denote the projection sending $\rho$ to $0$. Then $D_\rho = \sum_Z \pi (D_Z) Z$ is a new p-divisor on $Y$ with tailcone $\pi(\sigma)$;  this defines $B_\rho$ as a T-variety. 

On $X$ some of the subvarieties $Z_{y,F}$ or $B_\sigma$ are contracted (those where $\tail(F)$ or $\sigma$ is in $K$), when they are not we also denote the corresponding subvarieties of $X$ by the same symbols. If $\sigma \in K$ is contracted then we denote the corresponding orbit closure $r(B_\sigma)$ in $X$ by $W_\sigma$, as well as denote $r(Z_{p,F})$ by $W_{p,F}$, for $\tail F = \sigma$.

We are interested in studying the pseduoeffective cones $\overline{\Eff}_k(X)$ inside the group $N_k(X)$ of cycles modulo numerical equivalence, which by definition is  the closure of the cone of effective $k$-cycles.

\begin{theorem} \label{theorem:rationalPolyhedral2}
The pseudoeffective cones $\overline{\Eff}_k(X)$ of a complete T-variety $X$ of complexity one are rational polyhedral, generated by classes of invariant subvarieties.
\end{theorem}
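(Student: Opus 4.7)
The first step is to reduce to $\widetilde X$. Since $r\colon\widetilde X\to X$ is $T$-equivariant, projective, and birational, every effective $k$-cycle on $X$ lifts via strict transforms of its components to an effective $k$-cycle on $\widetilde X$, giving $r_*\overline{\Eff}_k(\widetilde X)=\overline{\Eff}_k(X)$; moreover $r$ sends each invariant subvariety of $\widetilde X$ either to zero (when its tailcone lies in $K$) or to an invariant subvariety of $X$. Hence it suffices to prove the statement for $\widetilde X$, where $g\colon \widetilde X\to\p^1$ is a genuine morphism.

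I then argue by induction on $\dim X$. The base case is trivial, and for the inductive step the paper records that $B=\bigcup_{\rho\in\Sigma(1)}B_\rho$ is a finite union of divisors, each itself a complete complexity one $T$-variety of strictly smaller dimension. By induction, every pseudoeffective $k$-class on any $B_\rho$ is a non-negative combination of invariant classes, which remain invariant after pushforward into $\widetilde X$. Each fiber of $g$ is a union of toric varieties, so by the toric case (every pseudoeffective class on a complete toric variety is a non-negative combination of classes of invariant subvarieties, obtainable by degeneration along a generic one-parameter subgroup) any cycle supported on finitely many fibers is covered by classes of $Z_{p,F}$'s. Cycles in non-special fibers move in a flat family over the connected base $\p^1\setminus P$, so only finitely many numerical classes arise.

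Given an effective $k$-cycle $Z$ on $\widetilde X$, split $Z=Z_{\mathrm{dom}}+Z_{\mathrm{fib}}$ by whether components dominate $\p^1$ under $g$. The fiber part is handled above. For $Z_{\mathrm{dom}}$, restrict to the generic fiber $X_\Sigma$; by the toric case the restricted $(k-1)$-cycle class equals $\sum_\sigma a_\sigma[V(\sigma)]$ with $a_\sigma\ge0$. Since $B_\sigma$ has generic fiber $V(\sigma)$, the correction $[Z_{\mathrm{dom}}]-\sum a_\sigma[B_\sigma]$ has zero generic fiber. Using the localization sequence for $\widetilde X\supset B\cup g^{-1}(P)$—whose complement is a principal $T$-bundle over the affine curve $\p^1\setminus P$, on which any $k$-cycle with trivial class in the generic toric fiber vanishes numerically—this correction is represented by a cycle supported on $B\cup g^{-1}(P)$ and is thus handled by the inductive hypothesis on $B$ together with the fiber argument.

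The main technical obstacle is that $Z_{\mathrm{dom}}-\sum a_\sigma B_\sigma$ need not itself be effective; one must check that its positive and negative parts separately lie in the invariant cone. This is automatic once one knows, again by induction and the toric case, that on $B$ and on each special fiber the invariant cone already equals the full pseudoeffective cone. Rational polyhedrality of $\overline{\Eff}_k(\widetilde X)$ then follows from finiteness of the combinatorial data (finitely many cones in $\Sigma$, finitely many points in $P$, and finitely many faces of each $S_p$), which produces only finitely many numerical classes of invariant $k$-cycles.
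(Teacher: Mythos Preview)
Your reduction to $\widetilde X$ and the inductive setup using $B=\bigcup_\rho B_\rho$ match the paper. The gap is in your treatment of $Z_{\mathrm{dom}}$. Writing the generic fiber class as $\sum a_\sigma[V(\sigma)]$ and then studying $[Z_{\mathrm{dom}}]-\sum a_\sigma[B_\sigma]$ via localization only shows that this difference lies in the \emph{image} of $A_k(B\cup g^{-1}(P))$ in $A_k(\widetilde X)$; it does not produce an effective representative there. Your final paragraph acknowledges this but the proposed fix is circular: knowing that on $B$ and on each fiber the invariant cone equals the pseudoeffective cone tells you only that an \emph{effective} cycle supported there is a non-negative invariant combination. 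It says nothing about a class that is merely a difference $D_+-D_-$ of effective cycles. From $[Z_{\mathrm{dom}}]=\sum a_\sigma[B_\sigma]+D_+-D_-$ with each summand in the invariant cone you cannot conclude that $[Z_{\mathrm{dom}}]$ itself lies in that cone.

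The paper avoids this entirely by degenerating on $\widetilde X$ rather than on the fibers. Pick a basis $v_1,\dots,v_n$ of $N$ and let $V_i$ be the flat limit of $\lambda_{v_i}\cdot V_{i-1}$ as $t\to 0$, starting from $V_0=V$. Each step preserves effectivity and numerical class, and $V_n$ is invariant under the full torus. Now every irreducible component $W$ of $V_n$ is $T$-invariant, so either $W$ sits in a fiber (toric case) or $W$ dominates $C$; in the latter case a one-line dimension count using the $T$-action on a general fiber forces $W\subset B$. Induction on $B$ then applies to an honest effective cycle, so no sign issue ever arises. You already invoke this degeneration inside the fibers; the point is to run it once on all of $\widetilde X$, which makes the comparison with $\sum a_\sigma B_\sigma$ and the localization sequence unnecessary.
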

\begin{proof}
Let $X$ be a  T-variety of complexity one. Then the map $r$ induces a surjection 
\[ r_\ast: \overline{\Eff}_k(\widetilde{X}) \to \overline{\Eff}_k (X) \]
such that any extremal class in $\overline{\Eff}_k(X)$ is the image of an extremal class in $\overline{\Eff}_k(\widetilde{X})$ \cite[Theorem 1.4]{FL}. This implies that if all psuedoeffective cones of $\widetilde{X}$ are rational polyhedral, then so are all pseudoeffective cones of $X$. Thus we can and will assume that $X=\widetilde{X}$ and that the rational quotient map $f=g$ is an actual morphism.

Fix a subvariety $V$ of dimension $k$ of $X$ and choose a basis $v_1,...,v_n$ of the lattice $N$. Let $\lambda_i$ be the one-parameter subgroup of $T$ corresponding to $v_i$. We have a corresponding action $\C^\ast \times X \to X$ for each $\lambda_i$. Let $V_1$ be the flat limit as $t$ goes to zero of $\lambda_1 \cdot V$. Similarly let $V_i$ be the flat limit as t goes to zero of $\lambda_i \cdot V_{i-1}$. Then $V_n$ is an effective cycle numerically equivalent to $V$ and invariant under the entire torus action. Now for any irreducible component $W$ of $V_n$ we have two possibilities: Either $W$ dominates $C$ or it is contained in a fiber $X_y$.

If it is contained in a fiber $X_y$, then since it is irreducible it is contained in an irreducible component of the fiber. Each such component is itself a toric variety and there are only finitely many components where the fiber is different from $X_\Sigma$. If it is contained in a general fiber then by definition it is algebraically equivalent to a cycle in one fixed general fiber, in particular it is also numerically equivalent. Thus $W$ can be written as an effective sum of finitely many generators.

If it is not contained in a fiber, then we claim it has to be contained in the boundary divisor $B$. Indeed, if not, then there is a point $y \in C$ such that $X_y \simeq X_\Sigma$ and such that there is  $x$ in $(X_y \setminus B_y) \cap W_y=T_\Sigma \cap W_y$. Then $T_\Sigma$ acting on $x$ will be a set of dimension $n$ in the fiber $X_y$. But then $W_y$ has to have dimension at least $n$, since it is invariant under the entire torus action. However by assumption it is a proper closed subset of $X_\Sigma$ which has dimension $n$, so its dimension must be smaller than $n$.

Thus when $W$ is not contained in a fiber, it is contained in $B$ which is a finite union of  $T$-varieties of complexity one of smaller dimension than $\widetilde{X}$. A T-variety of dimension one and complexity one is simply a curve. The proposition is obviously true for any curve, thus by induction it is true for any T-variety of complexity one.

\end{proof}

In fact the above proof gives some information about the generators of the pseudoeffective cones of $X$:

\begin{corollary}
Let $X$ be a complete contraction-free T-variety of complexity one, in other words such that $\widetilde{X}=X$. Then the pseudoeffective cone $\overline{\Eff}_k(X)$ is generated by $B_\tau$, where $\dim \tau = n+1-k$ and $Z_{y,F}$, where $y \in Y$ and $\dim F =n-k$.
\end{corollary}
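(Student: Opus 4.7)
The plan is to refine the proof of Theorem~\ref{theorem:rationalPolyhedral2} to identify precisely which invariant subvarieties appear as generators. As recalled there, every effective $k$-class on $X$ is numerically equivalent (via iterated flat limits along one-parameter subgroups $\lambda_1,\ldots,\lambda_n$) to an effective $T$-invariant $k$-cycle, so it suffices to show that each irreducible $T$-invariant $k$-dimensional subvariety $W \subset X$ lies in the cone spanned by the listed $B_\tau$ and $Z_{y,F}$. I would proceed by induction on $\dim X$. The base case $\dim X = 1$ is a smooth projective curve with trivial torus action, for which the only listed $0$-cycle generators are $Z_{y,\{0\}}$ for $y \in Y$ (the points of the curve) and the only listed $1$-cycle generator is $B_{\{0\}} = X$ itself; in both cases the statement is immediate.

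For the inductive step I would split on whether $W$ is contained in a fiber or dominates $C$, exactly as in the proof of Theorem~\ref{theorem:rationalPolyhedral2}. If $W \subset X_y$, then $W$ is an invariant subvariety of one of the toric components of $X_y$ and so equals $Z_{y,F}$ for some face $F$ of $S_y$; the equality $\dim W = \codim F = n - \dim F$ forces $\dim F = n - k$, as required. If instead $W$ dominates $C$, then the argument in that proof places $W$ inside $B_\rho$ for some ray $\rho \in \Sigma(1)$. Since $X$ is contraction-free, $B_\rho$ is itself a complete contraction-free $T$-variety of complexity one of dimension $n$, with tailfan the quotient fan in $N_\Q/\Q\rho$ coming from the p-divisor $D_\rho = \sum_Z \pi(D_Z)Z$ recalled in Section~\ref{section:subvarieties}. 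The inductive hypothesis therefore provides generators for $\overline{\Eff}_k(B_\rho)$ of the two promised types.

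The main technical step, which I expect to be the delicate part, is to check that these intrinsic generators of $B_\rho$ push forward along $B_\rho \hookrightarrow X$ to generators of $X$ of the listed form. Under the projection $\pi: N_\Q \to N_\Q/\Q\rho$, cones of the tailfan of $B_\rho$ are in bijection with cones of $\Sigma$ containing $\rho$, and faces of the polyhedral complex at $y$ defining $B_\rho$ correspond to faces $F$ of $S_y$ with $\rho \subset \tail F$. Tracking dimensions, a boundary generator $B'_{\bar\tau}$ of $B_\rho$ with $\dim \bar\tau = n-k$ pushes forward to $B_\tau$ for the unique $\tau \supset \rho$ with $\tau/\rho = \bar\tau$, giving $\dim \tau = n+1-k$; and a fiber generator $Z'_{y,F'}$ of $B_\rho$ with $\dim F' = n-1-k$ pushes forward to $Z_{y,F}$ for the corresponding face $F$ of $S_y$ with $\dim F = n-k$. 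This closes the induction.
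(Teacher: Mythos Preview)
Your approach is essentially the paper's: split each irreducible $T$-invariant $k$-cycle into the fiber case and the boundary case, and in the boundary case pass to $B_\rho$ and induct. The pushforward bookkeeping you give matches the paper's (the paper phrases it as $j_\ast(B_{p(\tau)}) = B_\tau$ and $j_\ast(Z_{v,p(F)}) = Z_{v,F}$), and your induction on $\dim X$ is a cosmetic repackaging of the paper's induction on $\codim W$.

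There is, however, one real omission. You assert that ``since $X$ is contraction-free, $B_\rho$ is itself a complete contraction-free $T$-variety of complexity one,'' and then apply the inductive hypothesis to it. This is exactly the point the paper isolates and proves, and it does not follow from anything stated in Section~\ref{section:subvarieties}. The argument is short but essential: because $X=\widetilde{X}$, every p-divisor in the divisorial fan of $X$ has at least one coefficient equal to $\emptyset$; the p-divisors describing $B_\rho$ are obtained by applying the projection $\pi\colon N_\Q \to N_\Q/\Q\rho$ to these coefficients, and $\pi(\emptyset)=\emptyset$, so every p-divisor on $B_\rho$ again has an empty coefficient. Hence no tailcone of $B_\rho$ can be marked, i.e.\ $B_\rho=\widetilde{B_\rho}$. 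Without this step your induction does not close, since the hypothesis of the corollary (contraction-freeness) is not available for $B_\rho$.
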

\begin{proof}
The proof of Theorem \ref{theorem:rationalPolyhedral2} implicity describes the generators: Any effective irreducible $k$-cycle $W$ has to either be contained in some $B_\rho$, $\rho \in \Sigma(1)$ or in some fiber $\widetilde{X}_y$.

If it is contained in a fiber $\widetilde{X}_y$ then, since the fiber is a union of irreducible toric varieties, it has to be contained in one of them, say the one corresponding to the vertex $v$ of $S_y$. After possibly replacing any compact face containing $v$ with the corresponding cone emanating from $v$ we have a complete fan $\Sigma'$ with vertex $v$ defining the toric variety. Now we know that as a cycle in $X_{\Sigma'}$, $W$ can be written as an effective sum $W \equiv \sum_{F_i \in \Sigma'(n-k)} a_i V(F_i)$. Letting $j_i: V(F_i) \to \widetilde{X}$ denote the inclusion, we have that $(j_i)_\ast V(F_i) = Z_{v,F_i}$. Thus in this case $W$ can be written as a positive sum of the finitely many $Z_{v,F}$.

Next we note the following: Since we are on $\widetilde{X}$ we know that no varieties are contracted by $r$, in particular any p-divisor must have $\emptyset$ as a coefficient. Thus any p-divisor for a $B_\rho$, which by the above is given by projecting the polyhedral coefficients of a p-divisor for $\widetilde{X}$, will also have an empty coefficient, in particular $B_\rho = \widetilde{B_\rho}$.

If an effective cycle is contained in some boundary divisor $B_\rho$ we argue by induction on $\codim W$. If $\codim W = 1$ then since $W$ is contained in $B_\rho$ and they are both irreducible of the same dimension, we must have $W=B_\rho$, hence we are done. If $\codim W >1$ then by induction $W$ can be written as a positive sum of $B_{p(\tau)}$ and $Z_{v,p(F)}$, where $p: N_\Q \to N_\Q/ \Q \rho$ is the projection. Denoting by $j: B_\rho \to \widetilde{X}$ the inclusion we have that $j_\ast (B_{p(\tau)}) = B_\tau$ and $j_\ast(Z_{v,p(F)}) = Z_{v,F}$. Thus we are done by induction.
\end{proof}

To study the generators of the pseudoeffective cones for general $X$ we need to also take into account the varieties that are contracted by $r$. First of all, it follows from \cite[Proof of Proposition 3.13]{PS} that for any invariant subvariety $Z$ of a T-variety $\widetilde{X}$ we have that
\[ \dim g(Z) + \dim r(Z) - \dim (Z) \geq 0\]
This combined with the fact that we always have $\dim(Z) \geq \dim(r(Z))$ we get that  if $\dim(g(Z)) = 0$ then $Z$ is not contracted. Moreover if $\dim(g(Z))=1$ and $Z$ is contracted then $\dim(r(Z))=\dim(Z)-1$. If $\sigma$ is a cone of dimension $k$, meaning that  $B_\sigma$ has codimension $k$ in $\widetilde{X}$, then if $\sigma \in K$, which means that $B_\sigma$ is contracted by $r$ to $W_\sigma$, by the above it is contracted to a subvariety of codimension $k+1$ in $X$. We will describe the images of the contracted varieties. \iffalse First we need some lemmas on stabilizer groups.

\begin{lemma}
Let $\sigma \in \Sigma$ and consider $B_\sigma \subset \widetilde{X}$ as a complexity one $T$-variety with respect to the torus $T(\sigma)$. Then for a face $Z_{q,F}$ with tailcone $\sigma$ we have that the generic isotropy group of the corresponding divisor in $B_\sigma$ is cyclic of order $\mu(v_F)$, where $v_F$ is the vertex which is the image of $F$ under the projection $N \to N/\Q \sigma$.
\end{lemma}
\begin{proof}
By the proof of \cite[Proposition 4.12]{HS} $B_\sigma$ is described as a $T$-variety by projecting the coefficients from $N$ to $N/\Q \sigma$. By \cite[Proposition 4.11]{HS} the order of the generic isotropy group is cyclic of order $\mu(v_F)$.
\end{proof}

\begin{lemma}
Assume $\sigma \in K$. Then the isotropy group of $W_\sigma \subset X$ with respect to $T(\sigma)$ is generated by the isotropy groups of all $Z_{p,F}$, with $\tail F = \sigma$.
\end{lemma}
\begin{proof}
For any $Z_{p,F}$ with $\tail F = \sigma$ we have that $r(Z_{p,F})=W_\sigma$. Since the map $r$ is equivariant we have that any element in the isotropy group of $Z_{p,F}$ is in the isotropy group of $W_\sigma$. Moreover if $t$ is not in the isotropy group of $Z_{p,F}$, then for any $x \in Z_{p,F}$ we have that $t \cdot x$ lies in some orbit $Z_{p,G}$ where $G$ does not contain $F$. But then $t \cdot r(x)$ cannot lie in the orbit $W_\sigma$ , hence $t$
\end{proof}

The following Lemma says exactly what the images of these contracted varieties are. \fi For a general point $q \in C$ we let $s_\sigma$ be the order of the group $[\Stab(Z_{q,\sigma}): \Stab(r(Z_{q,\sigma}))]$, of stabilizer groups.

\begin{lemma} \label{lemma:contractedSubvar}
Assume $\sigma \in \Sigma$ and that $\sigma \in K$. Then all faces $Z_{p,F}$ with $\tail F = \sigma$ are mapped to $W_\sigma$ under the contraction map $r$. Moreover we have the equality $ \mu(F) r_\ast([Z_{p,F}])=  s_\sigma [W_\sigma]$ of numerical classes, for $F$ with tailcone $\sigma$.
\end{lemma}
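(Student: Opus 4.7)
The plan is to establish the set-theoretic containment $r(Z_{p,F})\subseteq W_\sigma$ by an orbit-closure argument, then reduce the numerical equality to a degree computation carried out via $T$-equivariance of $r$ and a comparison of generic isotropy groups.

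For the set-theoretic claim I will show $Z_{p,F}\subseteq B_\sigma$ whenever $\tail F=\sigma$. Since $B_\sigma=\overline{\orb(\eta,\sigma)}$ dominates $C$, as the generic point $\eta$ specializes to $p$ the orbit $\orb(\eta,\sigma)$ degenerates into the union of the orbits $\orb(p,F')$ over faces $F'$ of $S_p$ with $\tail F'=\sigma$. Taking closures yields $Z_{p,F}\subseteq B_\sigma$, and since $r(B_\sigma)=W_\sigma$ by definition, the first assertion of the lemma follows.

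For the numerical identity I focus on the faces of minimal dimension with the prescribed tailcone, namely $F=v+\sigma$ for a vertex $v$ of $S_p$; for $\dim F>\dim\sigma$ the class $r_\ast[Z_{p,F}]$ lives in a strictly lower-dimensional Chow group than $[W_\sigma]$, so the asserted equality reduces to $0=0$ in that group. Under this assumption $\dim Z_{p,F}=n-\dim\sigma=\dim W_\sigma$, and since both varieties are irreducible and $r$ is $T$-equivariant, the restriction $r|_{Z_{p,F}}\colon Z_{p,F}\to W_\sigma$ is surjective and generically finite, whence
\[ r_\ast[Z_{p,F}] \;=\; \deg(r|_{Z_{p,F}})\,[W_\sigma]. \]
Equivariance reduces the degree to an index of stabilizers: for a generic point $x\in\orb(p,F)$, $\deg(r|_{Z_{p,F}})=[\Stab(r(x)):\Stab(x)]$. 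Using the description of $B_\sigma$ as a complexity-one $T(\sigma)$-variety obtained by projecting polyhedral coefficients along $\sigma$ (as in the proof of \cite[Proposition 4.12]{HS}) together with the standard isotropy/vertex-multiplicity dictionary (cf.\ \cite[Proposition 4.11]{HS}), I will show that $\Stab(x)$ has connected component equal to the subtorus $T_\sigma$ associated to $\sigma$ and component group cyclic of order $\mu(F)=\mu(v_F)$, where $v_F$ is the image of $v$ under $\pi\colon N_\Q\to N_\Q/\Q\sigma$. For a generic $q\in C$ one instead has $\Stab(x_q)=T_\sigma$ (since $\sigma\in S_q=\Sigma$ is itself a cone), and by the definition of $s_\sigma$ one gets $s_\sigma=[\Stab(r(x_q)):T_\sigma]$. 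Because $r(\orb(p,F))$ and $r(\orb(q,\sigma))$ are both the unique dense $T$-orbit of the irreducible $T$-variety $W_\sigma$, their generic stabilizers coincide, so
\[ \deg(r|_{Z_{p,F}})\;=\;\frac{[\Stab(r(x)):T_\sigma]}{[\Stab(x):T_\sigma]}\;=\;\frac{s_\sigma}{\mu(F)}, \]
which rearranges to $\mu(F)\,r_\ast[Z_{p,F}]=s_\sigma[W_\sigma]$.

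The principal technical obstacle is the identification of the component group of $\Stab(\orb(p,F))$ with a cyclic group of order exactly $\mu(F)$. Verifying this requires threading the polyhedral data through the projection $\pi$ that exhibits $B_\sigma$ as a $T(\sigma)$-variety, checking that the face $F=v+\sigma$ of $S_p$ is correctly matched with the vertex $v_F=\pi(v)$ of the projected polyhedral subdivision, and then applying the standard isotropy computation for complexity-one $T$-varieties.
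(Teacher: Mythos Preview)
Your argument is correct, and for the numerical identity it is essentially the paper's argument spelled out in more detail: both compute $\deg(r|_{Z_{p,F}})=s_\sigma/\mu(F)$ via the index of generic stabilizers, using the description of $B_\sigma$ as a $T(\sigma)$-variety and the vertex-multiplicity/isotropy dictionary.

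Where you genuinely diverge from the paper is in the set-theoretic claim. You obtain $r(Z_{p,F})\subseteq W_\sigma$ indirectly, by first arguing $Z_{p,F}\subseteq B_\sigma$ through specialization of $\orb(\eta,\sigma)$ and then using $W_\sigma=r(B_\sigma)$. The paper instead works directly with the identification criterion of \cite[Theorem~10.1]{AH}: it verifies that for any two points $p,q\in C$ the faces $F_p,F_q$ with tailcone $\sigma$ share the same normal cone $\lambda$ and that $\D^\sigma(u)=0$ for some $u$ in the relative interior of $\lambda$, so that $Z_{p,F_p}$ and $Z_{q,F_q}$ are literally glued by $r$. The maximal-cone case is immediate ($\lambda=\{0\}$, $\D(0)=0$); the non-maximal case uses axiom~(3) of Definition~\ref{definition:markedfansy} to force $\deg\D^\tau\subset\tau$. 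Your route is shorter and more geometric; the paper's route is more hands-on but yields a byproduct you do not recover: when $\sigma\in K$, each $S_p$ contains a \emph{unique} face with tailcone $\sigma$, and that face has $\dim F=\dim\sigma$. This uniqueness underlies the later bookkeeping of $V_k$ and $T_k$ in Theorem~\ref{theorem:exactsequence}.

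A small quibble: your handling of the case $\dim F>\dim\sigma$ as ``reduces to $0=0$'' is not quite right---if such an $F$ existed, the two sides of the asserted equality would sit in different graded pieces and the statement would be ill-posed rather than trivially true. The paper's argument shows this case is vacuous when $\sigma\in K$, so no harm is done, but you should not present it as a case that has been disposed of.
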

\begin{proof}

This  follows from \cite[Theorem 10.1]{AH}: Two faces $Z_{p,F}$ and $Z_{y,G}$ of the same polyhedral divisor $\D$ is identified under $r$ if  they have the same normal cone $\lambda$ and if $\D(u) = 0$ for some $u$ in the relative interior of $\lambda$ (if $C=\p^1$ this is in fact equivalent to $Z_{p,F}$ being identified with $Z_{y,G}$).

Let $\sigma \in K$ be a maximal cone. There is a polyhedral divisor $\D^\sigma$ with tailcone $\sigma$ and no empty coefficients. For any coefficient $\Delta_p^\sigma$ of $\D^\sigma$ the associated normal cone is the point $0$, moreover $\D(0)=0$, thus $Z_{p,\Delta_p^\sigma}$ is identified with $Z_{q,\Delta_q^\sigma}$ for any points $p,q \in C$, in particular, the entire  $B_\sigma$ in $\widetilde{X}$ is mapped to any fixed subvariety of the form $Z_{p,\Delta_p^\sigma}$.

If $\tau \in K$ is not maximal we have that $\cap_{\tau \preceq \sigma} \D^\sigma \cap \tau \neq \emptyset$ by Definition \ref{definition:markedfansy} (\ref{nr3}). This implies that in any fiber $S_p$ the intersection $\cap_{\tau \preceq \sigma} \Delta^\sigma_p \neq \emptyset$, which implies that in any fiber there is only one face $F_p$ with tailcone $\tau$. Let $\D^\tau$ be the p-divisor having the corresponding faces $F_p$ as coefficients. We have $F_p = \tau + Q_p$, where $Q_p$ is some polytope in $N_\Q$. Letting $v_p$ be a vertex of $Q_p$, we have that $F_p = \tau + v_p + (Q_p-v_p)$. Since there is only one face of $S_p$ with tailcone $\tau$, the polytope $Q_p-v_p$ must be contained in the linear span of $\tau$. This implies that all $F_p$ have the same normalcone $\lambda$. We wish to show that 
\[ \D^\tau(u) = \sum \min \langle F_p,u \rangle = 0 \]
for $u$ in the relative interior of $\lambda$. We have that $\min \langle F_p,u \rangle = \langle v_p,u \rangle$, since $u$ by definition is normal to any point in the linear span of $\tau$. Since $\deg \D^\tau = \deg \D^\sigma \cap \tau \subset \tau$ we must also have that $\sum v_p \in \tau$, thus $\langle \sum_p v_p,u \rangle=0$, which is what we wanted to show. Thus $Z_{p,F_p}$ is identified with $Z_{q,F_q}$ for any $p,q \in C$, thus the horizontal subvariety $B_\tau$ maps to any  $Z_{p,F_p}$.

For the last claim, we have that for a general fiber $q$ the map $r$ restricted to $Z_{q,\sigma}$, for a general point $q$, is finite of order $s_\sigma$, thus $r_\ast(Z_{q,\sigma}) = s_\sigma W_\sigma$. For a subvariety $Z_{p,F}$ of a special fiber the stabilizer group will have order $\frac{s_\sigma}{\mu(F)}$, thus $r_\ast (Z_{p,F}) = \frac{s_\sigma}{\mu(F)} W_\sigma$, which proves the claim.
\end{proof}

\begin{proposition} \label{proposition:effectiveconeX}
Let $X$ be a complete T-variety of complexity one. Then the pseudoeffective cone $\overline{\Eff}_k({X})$ is generated by $B_\tau$, where $\dim \tau = n+1-k$ and $\tau \notin K$, $Z_{y,F}$, where $y \in Y$ and $F \subset S_y$ has dimension $n-k$ and $\tail(F) \notin K$ and $W_\sigma$ where $\sigma$ has dimension $n-k$ and $\sigma \in K$.
\end{proposition}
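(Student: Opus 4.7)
The approach is to transfer generators from $\widetilde X$ to $X$ via the surjection $r_\ast\colon \overline{\Eff}_k(\widetilde X) \twoheadrightarrow \overline{\Eff}_k(X)$ already used in the proof of Theorem \ref{theorem:rationalPolyhedral2}. Applied to the contraction-free variety $\widetilde X$, the preceding corollary identifies the generators of $\overline{\Eff}_k(\widetilde X)$ as the classes $[B_\tau]$ with $\dim \tau = n+1-k$ together with the classes $[Z_{y,F}]$ with $\dim F = n-k$, so it suffices to compute $r_\ast$ on each of these and match the outputs with the claimed list.

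For a horizontal generator $[B_\tau]$: if $\tau \notin K$, then $r$ restricts to a birational morphism on $B_\tau$, so $r_\ast[B_\tau] = [B_\tau]$, a listed generator. If instead $\tau \in K$, the dimension estimate in the paragraph preceding Lemma \ref{lemma:contractedSubvar} gives $\dim W_\tau = n - \dim \tau = k-1$, so $r_\ast[B_\tau] = 0$ in $N_k(X)$. For a vertical generator $[Z_{y,F}]$: if $\tail F \notin K$, the identifications produced by Lemma \ref{lemma:contractedSubvar} never involve $Z_{y,F}$, so $r|_{Z_{y,F}}$ is birational and $r_\ast[Z_{y,F}] = [Z_{y,F}]$, again a listed generator. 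The remaining case is $\tail F = \sigma \in K$, where Lemma \ref{lemma:contractedSubvar} yields $\mu(F)\, r_\ast[Z_{y,F}] = s_\sigma\, [W_\sigma]$, expressing $r_\ast[Z_{y,F}]$ as a positive rational multiple of $[W_\sigma]$.

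The main subtle point is to ensure that in this last case $[W_\sigma]$ really is among the listed generators, which requires $\dim W_\sigma = n - \dim \sigma = k$, i.e.\ $\dim \sigma = n - k$; a priori we only know $\dim \sigma \leq \dim F = n-k$. The key input, already appearing inside the proof of Lemma \ref{lemma:contractedSubvar}, is that for non-maximal $\sigma \in K$ the unique face $F$ of each $S_y$ with $\tail F = \sigma$ satisfies $\dim F = \dim \sigma$ (it has the form $\sigma + Q$ with $Q$ contained in the linear span of $\sigma$). Combined with $\dim F = n-k$ this forces $\dim \sigma = n-k$, placing $[W_\sigma]$ in the list. For maximal $\sigma \in K$ (necessarily of dimension $n$), the condition $\tail F = \sigma$ with $\dim F = n-k$ forces $F$ to be full-dimensional, hence $k = 0$ and $\dim F = n = \dim \sigma$, so dimensions match in that case too. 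Combining the four cases shows that the listed classes generate $\overline{\Eff}_k(X)$.
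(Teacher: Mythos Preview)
Your proof is correct and follows the same approach as the paper: push forward the generators of $\overline{\Eff}_k(\widetilde X)$ via $r_\ast$, drop the $B_\tau$ with $\tau\in K$ because they contract in dimension, and replace each $r_\ast[Z_{y,F}]$ with $\tail F=\sigma\in K$ by the proportional class $[W_\sigma]$ using Lemma~\ref{lemma:contractedSubvar}. The paper's own argument is just these three sentences and does not spell out why $\dim\sigma=n-k$ in the last case; your final paragraph makes this explicit by extracting from the proof of Lemma~\ref{lemma:contractedSubvar} that for $\sigma\in K$ the (unique) face $F$ of each $S_y$ with $\tail F=\sigma$ satisfies $\dim F=\dim\sigma$, which is a genuine detail worth recording.
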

\begin{proof}
As noted earlier $\overline{\Eff}_k(X)$ is the image of $\overline{\Eff}_k(\widetilde{X})$ via $r_\ast$, thus we know that $r(B_\tau)$ and $r(Z_{p,F})$ generate $\overline{\Eff}_k(X)$ as above. However we can omit $r(B_\tau)$ for $\tau \in \Sigma(n+1-k) \cap K$ since $r_\ast (B_\tau) = 0$ in this case. Fixing $\sigma \in K$ of dimension $n-k$ we have by the lemma above that all classes $r_\ast(Z_{p,F})$, for any point $p$ and $F$ with tailcone $\sigma$, are proportional, thus it is more convenient to only remember the single representative $W_\sigma$ instead of all the different $r(Z_{p,F})$.
\end{proof}

\section{Chow groups}

We now assume $X$ is a rational and complete T-variety of complexity one with tailfan $\Sigma$. Fix $k$ and, inspired by the above result, define the following sets

$R_k =$ Cones of dimension $n+1-k$ not contracted by $r$

$V_k =$ Faces of dimension $n-k$ of special fibers such that the tailcone is not contracted.

$T_k =$ Cones of dimension $n-k$ contracted by $r$

Then there is a surjection
\begin{equation} \Z^{R_k \cup V_k \cup T_k} \to A_k(X) \to 0 \label{surjection} \end{equation}
For $k=n$ this is the surjection for the Picard group of $X$ given in \cite{AP} (note that $T_n$ is always empty). Altmann and Petersen also describe the relations between the generators:
\[ 0 \to \Z^{P}/\Z \oplus M \to \Z^{V_n \cup R_n} \to \Picc(X) \to 0. \]
Recall that $P$ is the set of points $p$ in $\p^1$ where the polyhedral subdivison $S_p$ do not equal $\Sigma$, for convenience we assume $\infty \in P$ (we must assume that $P$ has at least size two, if it hasn't, simply add $1$ or $2$ points with fiber $\Sigma$). The first map is given by the following: generators for $\Z^{P}/\Z$ correspond to principal divisors $[p]-[\infty]$ on $\p^1$, one such generator is mapped to $\sum_{p,v} \mu(v) D_{p,v} - \sum_{\infty,v} \mu(v) D_{\infty,v}$. A character $m \in M$ is mapped to $\sum_v \mu(v) \langle m,v \rangle D_{p,v} + \sum_{R_n} \langle m,\rho \rangle D_\rho$. Here $\mu(v)$ is the smallest integer such that $\mu(v)v$ is a lattice point. 

\begin{theorem} \label{theorem:exactsequence}
For a rational complete complexity one T-variety $X$ there is for any $0 \leq k \leq \dim X$ an exact sequence
\[ \bigoplus_{F \in V_{k+1}} M(F) \bigoplus_{\tau \in R_{k+1}} (M(\tau) \oplus \Z^P/\Z) \bigoplus_{\tau \in T_{k+1}} M(\tau) \to  \Z^{V_{k}} \oplus \Z^{R_{k}} \oplus \Z^{T_{k}} \to A_k(X) \to 0. \]
\end{theorem}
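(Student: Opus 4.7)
The middle-to-right surjection is already given by \eqref{surjection}, so the task is to construct the left-hand map and prove exactness at the middle. Each summand on the LHS parametrizes an invariant rational function on a $(k+1)$-dimensional invariant subvariety, whose divisor pushed forward to $X$ produces the corresponding relation among the $k$-dimensional generators. Concretely: for $F \in V_{k+1}$ the variety $Z_{y,F}$ is complete toric with character lattice $M(F)$, and $m \in M(F)$ is sent to $\sum_{F' \supset F,\, \mathrm{codim}\, 1} \langle m, v_{F'/F}\rangle [Z_{y,F'}]$, with Lemma \ref{lemma:contractedSubvar} used to replace those $Z_{y,F'}$ whose tailcone lies in $K$ by the appropriate multiple of $[W_{\tail F'}]$; for $\tau \in R_{k+1}$ the variety $B_\tau$ is itself a complete complexity-one $T(\tau)$-variety over $\p^1$ with tailfan $\mathrm{star}(\tau)$, and the Altmann--Petersen presentation of $\Picc(B_\tau)$ supplies the map from $M(\tau) \oplus \Z^P/\Z$ whose image is then pushed forward to $X$; for $\tau \in T_{k+1}$ the contracted variety $W_\tau$ is toric with character lattice $M(\tau)$, handled as in the first case. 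The composition to $A_k(X)$ vanishes since each image is a pushforward of a principal Weil divisor on a $(k+1)$-dimensional subvariety.

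\textbf{Exactness at the middle.} The plan is first to prove the statement for the contraction-free variety $\widetilde X$ (where $T_k = T_{k+1} = \emptyset$) and then descend to $X$ via $r$. For $\widetilde X$ I would induct on $\dim \widetilde X$, with base case $k = n$ being precisely the Altmann--Petersen sequence. The inductive step uses the localization sequence
\[ A_k(B) \to A_k(\widetilde X) \to A_k(\widetilde X \setminus B) \to 0, \]
with $B = \bigcup_{\rho \in \Sigma(1)} B_\rho$. Each $B_\rho$ is a complete complexity-one $T(\rho)$-variety of smaller dimension by \cite[Proof of Proposition 4.12]{HS}, so induction applied to each $B_\rho$, combined with pushforward, produces the relations indexed by all $\tau \in R_{k+1}$ and all $F \in V_{k+1}$ whose tailcone contains a ray. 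By the proof of Theorem \ref{theorem:rationalPolyhedral2}, invariant cycles on $\widetilde X \setminus B$ are supported in the special fibers of $g$, so the fiberwise Fulton--Sturmfels sequence on the toric components, combined with the $\Z^P/\Z$ factor recording that principal divisors on $\p^1$ pull back to sums of fibers, gives the remaining relations. To descend from $\widetilde X$ to $X$ we use that $r_*: A_k(\widetilde X) \twoheadrightarrow A_k(X)$ is surjective and that by Lemma \ref{lemma:contractedSubvar} its kernel is generated by the identifications $\mu(F)[Z_{p,F}] - \mu(F')[Z_{q,F'}]$ for $\tail F = \tail F' = \sigma \in K$; these are exactly the extra relations encoded by the $W_\sigma$ generators from $T_k$ and the $M(\tau)$ relations for $\tau \in T_{k+1}$.

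\textbf{Main obstacle.} The principal difficulty is the assembly step: a cone $\tau$ with $\dim \tau \geq 2$ is contained in several $B_\rho$, so the same relation in $M(\tau) \oplus \Z^P/\Z$ can be produced by the induction hypothesis applied to distinct boundary divisors, and one must verify that these redundancies are precisely quotiented out so that the stated direct sum decomposition is correct. A parallel but finer bookkeeping involving the multiplicities $\mu(F)$ and $s_\sigma$ of Lemma \ref{lemma:contractedSubvar} is what turns the descent from $\widetilde X$ to $X$ into a genuine computation rather than a formal diagram chase; and pinning down the correct quotient $\Z^P/\Z$ (as opposed to $\Z^P$) is exactly the step that matches our $k = n$ piece with the Altmann--Petersen result, confirming that the index shift by one on the base $\p^1$ is absorbed by a single global $\Z$ relation.
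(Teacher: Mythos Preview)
Your description of the maps and the check that the composition vanishes match the paper. The divergence is in exactness at the middle, where your plan has a genuine gap: you never invoke \cite[Theorem~1]{FMSS} (that $A_k^T(X)\to A_k(X)$ is an isomorphism) or any substitute. Without it, the localization step is incomplete---you need a full presentation of $A_k(\widetilde X\setminus B)$, but your observation that invariant cycles there lie in special fibres only constrains $A_k^T$, and you give no reason why an arbitrary $k$-cycle on this open variety is rationally equivalent to an invariant one. The descent step has the same defect: Lemma~\ref{lemma:contractedSubvar} computes images under $r_\ast$ but says nothing about generators of $\ker r_\ast$; for instance each horizontal class $[B_\sigma]$ with $\sigma\in K$ and $\dim\sigma=n{+}1{-}k$ lies in the kernel and is not visibly a combination of your vertical differences $\mu(F)[Z_{p,F}]-\mu(F')[Z_{q,F'}]$.

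The paper's route is more direct and sidesteps both issues. It applies FMSS to $X$ itself, so that all relations are automatically divisors of eigenfunctions on $T$-invariant $(k{+}1)$-subvarieties of $X$; these are exactly the subvarieties indexed by $V_{k+1}\cup R_{k+1}\cup T_{k+1}$, together with the orbits $Z_{q,\sigma}$ in \emph{general} fibres. The only remaining work is the combinatorial verification---via the lattice-index Lemma~\ref{lemma:latticeIndex}---that adjoining a general point $q$ to $P$ contributes no new generators or relations beyond those already present over $P$. No induction on dimension, no localization sequence, no descent from $\widetilde X$.
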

The maps are given as follows:

If $F \in V_{k+1}$ then it corresponds to an invariant subvariety of an irreducible component of a fiber $p$ (possibly several components, if so pick one). Inside this component $F$ corresponds to a cone $\sigma_F$, thus $F$ corresponds to a toric subvariety of this component, we denote its character lattice by $M(F)$ (note that by \cite[Corollary 7.11]{AH} $M(F)$ could be a finite index sublattice of $M(\sigma_F)$) Then $m \in M(F)$ maps to $\sum_{\dim G = n-k, F \subset G} \langle m,v_{F,G} \rangle Z_{p,G}$, where $v_{F,G}$ generates $N(F)/N(G)$. This is the usual notion of rational equivalence on a toric variety. We note that if $\tail G \in K$, then $Z_{p,G}$ is not one of the generators of $V_k$, however by Lemma \ref{lemma:contractedSubvar} $ \mu(G) r_\ast(Z_{p,G})= s_\sigma W_{\tail G}$, thus we will identify $r_\ast(Z_{p,G})$ with the corresponding multiple of the generator $W_{\tail G}$  in $T_k$.

If $\tau \in R_{k+1}$ then $V(\tau)$ itself corresponds to a T-variety of complexity one. The map comes from this structure, as in the exact sequence of Altmann and Petersen. Explicitly a generator of $\Z^P/\Z$ corresponds to a divisor $[p]-[\infty]$ and it is mapped to $\sum_{F, \tail F = \tau} \mu(v_F) Z_{p,F} - \sum_{F, \tail F = \tau} \mu(v_F) Z_{\infty,F}$ where $v_F$ is the vertex corresponding to the image of $F$ in the divisorial fan corresponding to $V(\tau)$ as a T-variety (see Section \ref{section:subvarieties}). A character $m \in M(\tau)$ is mapped to $\sum_{F \in V_k, \tail F = \tau} \mu(v_F) \langle m,v_F \rangle Z_{p,F} + \sum_{\sigma \in R_k ,\tau \subset \sigma} \langle m,\bar{\sigma} \rangle Z_\sigma$ where $\bar{\sigma}$ is the ray which is the image of $\sigma$ in $N/N_\tau$.
 
If $\tau \in T_{k+1}$ then every cone $\sigma \in \Sigma$ containing $\tau$ will also be contracted by $r$, in particular any such $\sigma$ of dimension $n-k$ lies in $T_k$. Thus $m \in M(\tau)$ maps to $\sum_{\dim \sigma = n-k, \tau \subset \sigma} \langle m,v_{\tau,\sigma} \rangle r(B_\sigma)$. Again this is just the usual notion of rational equivalence on a toric variety.
 
\begin{proof}
As already noted Proposition \ref{proposition:effectiveconeX} implies that the final map is surjective. 

An invariant subvariety $W$ of $X$ corresponding to some face of a polyhedral subdivision could be non-normal. By \cite[Theorem 10.1]{AH} the corresponding subvariety of $\widetilde{X}$ is the normalization $\overline{W}$. Moreover by \cite[Proposition 5.2]{AH}  the torus of the normalization is the same as the torus of $W$ itself. By \cite[Ch. 5, Proposition 3.3]{GKZ} the normalization is in this case bijective. By \cite[Example 1.2.3]{Fulton} we thus have that the orders of vanishing of any rational function on $W$ is the same as the orders of vanishing on $\overline{W}$, in particular they can be computed using the usual techniques on normal toric varieties.

By construction of the maps giving the relations, we see that they are given by choosing an invariant subvariety $Z$ of dimension $k+1$ and choosing a rational function on $Z$ and taking its divisor. Thus by definition these will give relations in $A_k(X)$, thus the composition of the two maps are zero. 

By \cite[Theorem 1]{FMSS} the canonical homomorphism  $A_k^T(X) \to A_k(X)$ is an isomorphism, where $A_k^T(X)$ is the T-stable Chow group of $X$. By definition this is the quotient $Z_k^T(X)/R_k^T(X)$ of the group $Z_k^T(X)$ generated by T-invariant subvarieties of $X$ modulo the subgroup $R_k^T(X)$ generated by divisors of eigenfunctions on T-invariant $(k+1)$-dimensional subvarieties of $X$. In particular this implies that all relations in $A_k(X)$ come from divisors on T-invariant subvarieties.

In the exact sequence above we have by construction all such subvarieties and relations, except that we omit T-invariant subvarieties of general fibers such that the tailcone is not contracted. Letting $q$  be a point with general fiber we set  $P'= P \cup \{q\}$, thus we now consider $q$ as a special fiber. We show that every new subvariety and every new relation in the corresponding exact sequence is already generated by those in the exact sequence from $P$. Thus we may omit any general fiber, hence the sequence in the theorem is exact. This is similar to Altmann-Petersen's proof for the case of divisors \cite[Corollary 2.3]{AP}.

For any cone $\sigma \in \Sigma(n-k)$ where $\sigma \notin K$ we get the subvariety $Z_{q,\sigma}$ as a summand in $V_k$. There will also be a new relation coming from considering $\sigma \in R_{k+1}$ and observing that $\Z^{P'}/\Z$ has rank one more than $\Z^P/\Z$. The extra relation express $Z_{p,F}$ in terms of the other generators, thus we may omit $Z_{q,\sigma}$ as a generator in $V_k$, as well as the corresponding relation.

There is one more class of relations coming from adding $q$: Each cone $\tau$ of dimension $n-k-1$ with $\tau \in K$ gives an element in $V_{k+1}$. Any $m \in M(\tau)$ gives the relation 
\[ \sum_{\tau \preceq \sigma, \dim \sigma = n-k} \langle m,v_{\tau,\sigma} \rangle Z_{q,\sigma} \]
Now we already have relations saying that 
\[Z_{q,\sigma} = \sum_{H \preceq S_\infty, \tail H = \sigma} \mu(H) Z_{\infty,H}. \]
Thus we need to show that the relation
\[ \sum_{\tau \preceq \sigma, \dim \sigma = n-k} \langle m,v_{\tau,\sigma} \rangle  \sum_{H \preceq S_\infty, \tail H = \sigma} \mu(H) Z_{\infty,H}  \]
is in the group of relations generated by sequence in the theorem.

Pick a face $G$ of $S_\infty$ of dimension $n-k-1$ with tailcone $\tau$. Then $G = \tau + Q$, where $Q$ is a polytope. Letting $v$ be a vertex of $Q$ we have that $G=\tau +v + (Q-v)$. Since $\dim \tau = \dim G$ we must have that $(Q-v)$ is contained in the linear span of $\tau$. After taking the quotient $N/\Q \tau$ the image of $G$ will thus be the vertex $\bar{v}$, the image of $v$. Let $e_1,...,e_k$ be a $\Z$-basis for $\bar{v}^\perp$ and $e_0,e_1,...,e_k$ a $\Z$-basis for $M(\tau)$. We have that $M(G)$ is generated by $\mu(G)e_0,e_1,...,e_k$. This implies that the lattice index $[N(\tau):N(G)]$ equals $\mu(G)$. We then have the relation
\[ \sum_{G \preceq H, \dim H = n-k} \langle \mu(G)e_i,v_{G,H} \rangle Z_{\infty,H}. \]
For any $H$ with tailcone $\sigma$ we have that $\mu(G) v_{G,H} = \mu(H) v_{\tau,\sigma}$, by Lemma \ref{lemma:latticeIndex} (see below), thus the relation equals
\[ \sum_{G \preceq H, \tail H = \sigma} \mu(H) \langle e_i,v_{\tau,\sigma} \rangle Z_{\infty,H}  + \sum_{G \preceq H, \tail H = \tau} \mu(G) \langle e_i,v_{G,H} \rangle Z_{\infty,H}, \]
where all $H$ in the sums are of dimension $n-k$.
Now, if $H$ contains $G$, has dimension $n-k$ and tailcone $\tau$ then the image $\overline{H}$ in $N/\Q \tau$ is a compact edge $e$ having $\ol{G}$ has one of its vertices. The other vertex also corresponds to some $G'$ with tailcone $\tau$ and of dimension $n-k-1$. There is a similar relation to the above, corresponding to $G'$. Now $\mu(G)v_{G,H}$ equals a primitive generator in $N/\tau \cap N$ for the linear space spanned by $e$. Similarly $\mu(G')v_{G',H}$ equals a primitive generator the same linear space, but with different sign. Thus if we sum the relations from $G$ and $G'$ the term $Z_{\infty,H}$ will cancel. Thus if we sum all the relations corresponding to all possible such $G$s once, we see that the resulting relation is 
\[  \sum_G \sum_{G \preceq H, \dim H = n-k, \tail H = \sigma} \mu(H) \langle e_i,v_{\tau,\sigma} \rangle Z_{\infty,H} \]
By grouping together terms corresponding to the same cone $\sigma$ we see we can write this relation as 
\[ \sum_{\tau \preceq \sigma, \dim \sigma = n-k} \langle e_i,v_{\tau,\sigma} \rangle  \sum_{H \preceq S_\infty, \tail H = \sigma} \mu(H) Z_{\infty,H}  \]
which is the relation we wanted to show for $m=e_i$. Since this is true for any $i$ it will also follow for any $m$ by linearity.
\end{proof}

\begin{lemma} \label{lemma:latticeIndex}
Assume $\tau \preceq \sigma$ are cones satisfying $\dim \tau +1 = \dim \sigma$. Assume $G \preceq H$ are faces of some $S_p$, with $\tail G = \tau, \tail H =\sigma$, $\dim G = \dim \tau$ and $\dim H = \dim \sigma$. Then $\mu(G) v_{G,H} = \mu(H) v_{\tau,\sigma}$.
\end{lemma}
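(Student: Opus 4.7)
The plan is to derive the identity from multiplicativity of lattice indexes in a short commutative ladder of short exact sequences of lattices.

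First, from the proof of Theorem \ref{theorem:exactsequence} one already has the relevant description of $N(G)$ and $N(H)$. Writing $v_G$ for the vertex of $G$ (so that $G = v_G + \tau$) and $\bar v_G$ for its image in $N(\tau)$, one has $M(G)=\{m \in M(\tau) : \langle m,\bar v_G\rangle \in \Z\}$ with $[M(\tau):M(G)]=\mu(G)$; dualizing gives the inclusion $N(\tau) \hookrightarrow N(G)=N(\tau)+\Z\bar v_G$ of index $\mu(G)$. The identical analysis for $(\sigma,H)$ produces $N(\sigma)\hookrightarrow N(H)=N(\sigma)+\Z\bar v_H$ of index $\mu(H)$. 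Moreover, $G \subseteq H$ forces $v_G - v_H \in \mathrm{lin}\,\sigma$, so $\bar v_G$ maps to $\bar v_H$ under the canonical projection $N(\tau)\otimes\Q \twoheadrightarrow N(\sigma)\otimes\Q$.

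The main step is to assemble these data into a commutative ladder of short exact sequences. The top row is the standard toric sequence $0 \to \Z v_{\tau,\sigma} \to N(\tau) \to N(\sigma) \to 0$ associated to $\tau \preceq \sigma$; the bottom row $0 \to \Z v_{G,H} \to N(G) \to N(H) \to 0$ is obtained by restricting the same projection to $N(G)$. Right-exactness follows since $N(G) = N(\tau)+\Z\bar v_G$ surjects onto $N(\sigma)+\Z\bar v_H = N(H)$; left-exactness is immediate from the definition of $v_{G,H}$ as the primitive generator of the infinite cyclic subgroup $N(G)\cap\Q v_{\tau,\sigma}$. Multiplicativity of indexes in the three vertical columns then yields
\[ [\Z v_{G,H}:\Z v_{\tau,\sigma}] \;=\; [N(G):N(\tau)]/[N(H):N(\sigma)] \;=\; \mu(G)/\mu(H), \]
so $v_{\tau,\sigma} = (\mu(G)/\mu(H))\,v_{G,H}$, and clearing denominators gives the lemma.

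I expect the main obstacle to be the clean identification $N(G)\cap \Q v_{\tau,\sigma} = \Z v_{G,H}$, that is, the verification that the primitive lattice vector of $N(G)$ along the ray $\Q_{\geq 0}v_{\tau,\sigma}$ agrees with the $v_{G,H}$ used in the statement of Theorem \ref{theorem:exactsequence}. As a fallback, the identity can be checked in coordinates: pick a $\Z$-basis $e_0=v_{\tau,\sigma},e_1,\ldots,e_k$ of $N(\tau)$, write $\bar v_G=\sum a_ie_i$ with $q_i$ the denominator of $a_i$, note that $\mu(G)=\mathrm{lcm}(q_0,\ldots,q_k)$ while $\mu(H)=\mathrm{lcm}(q_1,\ldots,q_k)$, and solve for the smallest positive $t$ with $te_0 \in N(\tau)+\Z\bar v_G$; this yields $t = \mu(H)/\mu(G)$, giving the identity directly.
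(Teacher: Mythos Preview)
Your argument is correct and is essentially the same as the paper's: both deduce the identity from the multiplicativity of lattice indices for the chain $N(\tau)\subset N(G)$, $N(\sigma)\subset N(H)$, and the induced inclusion $\Z v_{\tau,\sigma}\subset \Z v_{G,H}$. The only difference is packaging: the paper picks a basis $e_1,\dots,e_{k+1}$ of $N(G)$ adapted to $N(H)$, writes $N(\tau)$ via an upper triangular change of basis with diagonal entries $\beta_i$, and reads off $\mu(G)=\prod_i\beta_i$, $\mu(H)=\prod_{i\le k}\beta_i$, and $v_{\tau,\sigma}=\beta_{k+1}v_{G,H}$ directly, whereas you run the same computation as a snake-lemma/index count on the ladder of short exact sequences. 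Your coordinate ``fallback'' is likewise correct once you interpret $t$ as the smallest positive \emph{rational} with $te_0\in N(G)$; the verification that this $t$ equals $\mu(H)/\mu(G)$ is a short $\gcd$/lcm computation.
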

\begin{proof}
We may assume $e_1,...,e_k$ is a basis for $N(H)$ and $e_1,...,e_{k+1}$ is a basis for $N(G)$. $N(\tau)$ is a sublattice of $N(H)$ and there is an upper triangular integer matrix $B$ such that $\{ b_i = B e_i \}$ is a basis for $N(\tau)$. In particular the index $\mu(G)=[N(G):N(\tau)]$ equals the product of the diagonal entries $\beta_i$ of $B$. We may also assume that $b_1,...,b_k$ is a basis for $N(\sigma)$. By definition $v_{\tau,\sigma}$ is a generator of $N(\tau)/N(\sigma)$, in our chosen basis we can choose it as the image of $b_{k+1}$ in the quotient. Similarly the image of $e_{k+1}$ is a generator of $N(G)/N(H)$. Thus we see that $\beta_{k+1} v_{G,H}=v_{\tau,\sigma}$. We also have that $\mu(H)=[N(H):N(\sigma)] = \beta_1 \cdots \beta_k$, in particular $\beta_{k+1}= \frac{\mu(G)}{\mu(H}$, which proves the statement.
\end{proof}

\begin{remark}
We denote the number of elements of $R_k,V_k,T_k$ by $r_k,v_k,t_k$, respectively.
\end{remark}

\begin{example}
Let $X=\Gr(2,4)$ which we can identify with the quadric \[V(p_{12}p_{34}-p_{13}p_{24}+p_{14}p_{23}) \subset \p^5. \]
The group $SL_4(\C)$ acts on $\Gr(2,4)$ and contains the subgroup of diagonal matrices which is a $3$-dimensional torus and which acts effectively on $\Gr(2,4)$ making it into a $T$-variety of complexity one. The rational quotient $Y$ is naturally identified with the moduli space of marked genus $0$ curves $\ol{M_{0,4}} \simeq \p^1$ (see \cite{Kapranov}). The identification is defined as follows. We now think of $\Gr(2,4)$ as the space of lines in $\p^3=\p(V)$ where $V$ has basis $x_1,...,x_4$. If $l \in \Gr(2,4)$ is a general line then the intersections $l \cap \{ x_i=0 \}$ will give four points $p_1,p_2,p_3,p_4$ on $l$. Then the cross ratio $\mathrm{CR}(p_1,p_2,p_3,p_4)$ defines the rational map to $\p^1$. In this way we obtain all values in $\p^1$ except $\{0,1,\infty \}$. The points $0,1,\infty$ are obtained by the non-general lines in the sets $V(p_{14}) \cup V(p_{23}), V(p_{12}) \cup V(p_{34}), V(p_{13}) \cup V(p_{24})$, respectively.

In coordinates the map is given as follows: On the open affine set $D(p_{12})$ a point in Plucker coordiates maps to $(p_{13}p_{24} : p_{23}p_{14} ) \in \p^1$. The indeterminancy locus is given when both coordinates equals $0$, we see that this locus consists of the eight planes
\[ Z_k^+= \{ V(p_{ij}) | k \in \{i,j \} \} \]
\[Z_k^-= \{ V(p_{ij}) | k \notin \{i,j \} \}. \]

We can resolve the quotient map by blowing up the union of the eight planes to get a map $\widetilde{X}=\Bl \Gr(2,4) \to \p^1$. We will reinterpret this example in the language of $T$-varieties.

The paper \cite{AltH} exhibits a divisorial fan for $\Gr(2,4)$: Let $N=\Z^4/\Z$ and let $e_1,e_2,e_3,e_0$ denote the image of the standard basis vectors of $\Z^4$, thus $e_0 = -e_1-e_2-e_3$. The tailfan $\Sigma$ is the toric threefold with maximal cones $\Cone(\pm e_1, \pm e_2, \pm e_3 \pm e_4 | \text{there are exactly 2 pluses and 2 minuses})$. This has $6$ maximal cones, $12$ cones of dimension two and $8$ rays. The special fibers correspond to the boundary divisors $\ol{M_{0,4}} \setminus M_{0,4}$ of reducible genus $0$ curves, of which there are three. They correspond to partitions $($(\{1,4\}, \{2,3\})$,(\{1,2\}, \{3,4 \})$, $(\{1,3\}, \{2,4 \})$, we may assume these correspond to the points  $0,1,\infty$, respectively.

The fiber over $0$ corresponds to replacing the origin with the compact edge $f_{23}$ with vertices $ (0,0,0)$ and $(-1,-1,0)$. Similarly in the fiber over $1$ we insert the edge $f_{12}$ with vertices $ (0,0,0)$  and $(-1,0,-1)$ and over $\infty$ the edge $f_{13}$ with vertices $(1,1,1)$ and $(1,0,0)$ (the polyhedra which is written in \cite[Theorem 4.2]{AltH} is a shifted version of the above, with rational coefficients. By \cite[p.8 Remark 2]{AltH} the true p-divisor correspond to a shift turning all polyhedra into lattice polyhedra, which is what we have done.) The p-divisor containing the edge $f_{ij}$ has the empty coefficient over the other two special fibers. For a cone $\sigma$ of $\Sigma$ the faces of the special fibers with tailcone $\sigma$ all belong to the same p-divisor. See Figure \ref{figure:g24}. 
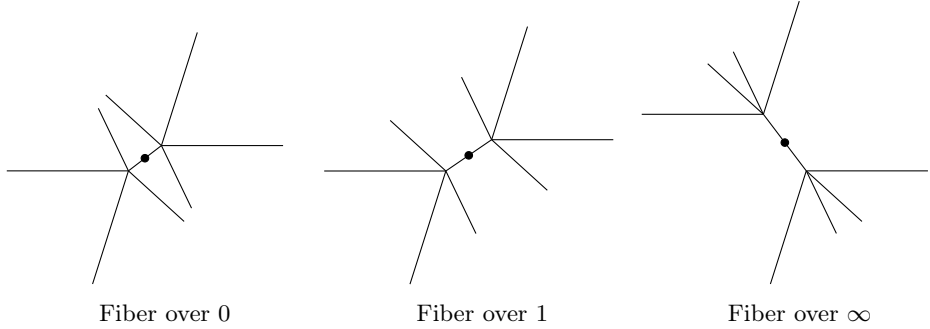
\begin{figure}  
\begin{subfigure}[t]{.33\linewidth}
\begin{tikzpicture} [scale=0.8]
\begin{scope}[rotate around x=40]
\node[draw,circle,inner sep=1pt,fill] at (0,0) {};
\draw [-] (0,0,0) -- (0,1/2,1/2);
\draw [-] (0,0,0) -- (0,-1/2,-1/2);
\draw [-] (0,5/2,1/2) -- (0,1/2,1/2);
\draw [-] (0,-5/2,-1/2) -- (0,-1/2,-1/2);
\draw [-] (0,1/2,5/2) -- (0,1/2,1/2);
\draw [-] (0,-1/2,-5/2) -- (0,-1/2,-1/2);
\draw [-] (-2,1/2,1/2) -- (0,1/2,1/2);
\draw [-] (2,-1/2,-1/2) -- (0,-1/2,-1/2);
\draw [-] (2,5/2,5/2) -- (0,1/2,1/2);
\draw [-] (-2,-5/2,-5/2) -- (0,-1/2,-1/2);

\end{scope}
\end{tikzpicture}
\subcaption*{Fiber over $0$}
\end{subfigure}%
\begin{subfigure}[t]{.33\linewidth}
\begin{tikzpicture} [scale=0.8]
\begin{scope}[rotate around x=40]
\node[draw,circle,inner sep=1pt,fill] at (0,0,0) {};
\draw [-] (0,0,0) -- (1/2,1/2,0);
\draw [-] (0,0,0) -- (-1/2,-1/2,0);
\draw [-] (5/2,1/2,0) -- (1/2,1/2,0);
\draw [-] (-5/2,-1/2,0) -- (-1/2,-1/2,0);
\draw [-] (1/2,5/2,0) -- (1/2,1/2,0);
\draw [-] (-1/2,-5/2,0) -- (-1/2,-1/2,0);
\draw [-] (1/2,1/2,-2) -- (1/2,1/2,0);
\draw [-] (-1/2,-1/2,2) -- (-1/2,-1/2,0);
\draw [-] (5/2,5/2,2) -- (1/2,1/2,0);
\draw [-] (-5/2,-5/2,-2) -- (-1/2,-1/2,0);

\end{scope}
\end{tikzpicture}
\subcaption*{Fiber over $1$}
\end{subfigure}%
\begin{subfigure}[t]{.33\linewidth}
\begin{tikzpicture} [scale=0.8]
\begin{scope}[rotate around x=40]
\node[draw,circle,inner sep=1pt,fill] at (0,0) {};
\draw [-] (0,0,0) -- (1/2,0,1/2);
\draw [-] (0,0,0) -- (-1/2,0,-1/2);
\draw [-] (5/2,0,1/2) -- (1/2,0,1/2);
\draw [-] (-5/2,0,-1/2) -- (-1/2,0,-1/2);
\draw [-] (1/2,0,5/2) -- (1/2,0,1/2);
\draw [-] (-1/2,0,-5/2) -- (-1/2,0,-1/2);
\draw [-] (1/2,-2,1/2) -- (1/2,0,1/2);
\draw [-] (-1/2,2,-1/2) -- (-1/2,0,-1/2);
\draw [-] (5/2,2,5/2) -- (1/2,0,1/2);
\draw [-] (-5/2,-2,-5/2) -- (-1/2,0,-1/2);
\end{scope}
\end{tikzpicture}
\subcaption*{Fiber over $\infty$}
\end{subfigure}
\caption{Polyhedral subdivisons defining $\Gr(2,4)$. From each of the non-zero vertices of the polyhedral subdivisions there is emanating a three-dimensional cone with four generators.}
\label{figure:g24}
\end{figure}
Thus we get the following numbers:
\[ r_3=0,  v_3=6, t_3=0 \]
\[ r_2=0,  v_2=3, t_2=8 \]
\[ r_1=0, v_1=0, t_1=12 \]
\[ r_0=0, v_0=0, t_0=6. \]
The six invariant subvarieties $V(p_{ij})$ correspond to $V_3$.  $V_2$ corresponds to the subvarieties $V(p_{12},p_{34})$, up to permutation, while $t_2=8$ says exactly that there are eight invariant subvarieties of codimension two which are blown up by $r$, they correspond to the sets $Z_k^\pm$.
Thus we get an exact sequence 
\[ \Z^{18} \to \Z^{11} \to A_2(\Gr(2,4)) \to 0 \]
The $\Z^{11}$ has generators $W_{12},W_{13},W_{23}$ corresponding to the edges $f_{ij}$, and $E_i^\pm$ corresponding to the rays $\pm e_i$. The $\Z^{18}$ corresponds to six copies of $\Z^3$, one for each vertex of a special fiber. Fixing for example the vertex $v=(0,0,0)$ of $f_{13}$, we have that this is the vertex of a toric variety with rays with directions
\[ \begin{vmatrix} 1 \\ 0 \\ 0 \end{vmatrix},\begin{vmatrix} 0 \\ 1 \\ 0 \end{vmatrix},\begin{vmatrix} 0 \\ 0 \\ -1 \end{vmatrix},\begin{vmatrix} 1 \\ 1 \\ 1 \end{vmatrix},\begin{vmatrix} -1 \\ -1 \\ 0 \end{vmatrix}. \]
The relations we get from this will be 
\[ \ddiv(1,0,0) = E_1^+ + E_0^- - W_{13} \]
\[ \ddiv(0,1,0) = E_2^+ + E_0^- - W_{13} \]
\[ \ddiv(0,0,1) = E_3^- - E_0^-.\]
Doing this for all vertices we see that all $E_i^-$ are identified, call this class $E^-$, similarly all $E_i^+$ are identified, call this $E^+$ and all $W_{ij}$ are identified, call this $W$. Then this gives a presentation 
\[ A_2(Gr(2,4)) \simeq \Z(E^+,E^-,W)/(E^+ + E^- - W)\] 
which we see is isomorphic to a wellknown presentation of this group, namely \[ \Z(s_{1,1},s_2, s_1^2) / (s_{1,1} + s_2 - s_1^2)\]
Here the $s_1,s_2,s_{1,1}$ correspond to Schubert cycles in the Chow ring of $\Gr(2,4)$.

Similarly there is an exact sequence
\[ \Z^{22} \to \Z^{12} \to A_1(Gr(2,4)) \to 0 \]
The generators of $\Z^{12}$ correspond to the $12$ two-dimensional faces of $\Sigma$, they are of the form $\Cone(e_i,-e_j)$. Denote the corresponding generator by $Z_{i,-j}$. The relations come from two types: three copies of $\Z^2$ coming from the edges $f_{ij}$. Writing out the relations we see that they give $Z_{i,-j} = Z_{j,-i}$. Also there are eight copies of $\Z^2$ corresponding to the rays of $\Sigma$, they give relations $Z_{i,-j} = Z_{x,-k}$. Combining these we see that all $Z_{i,-j}$ are identified, thus $A_1(Gr(2,4))$ is one-dimensional, as expected.
\end{example}

\section{Toric downgrades}

In this section we study the example of downgrading a toric variety to only consider it as a T-variety of complexity one. Not surprisingly the exact sequence of Theorem  \ref{theorem:exactsequence} coincides with the exact sequence of Fulton-Sturmfels.

We now conisder a toric variety coming from a fan $\Sigma$ living in $\Z^{n+1} \otimes \Q$. We choose a splitting $\Z^{n+1} = \Z^n \oplus \Z = N \oplus \Z$ and consider $X_\Sigma$ only with the action of $T_N$. We then have an exact sequence
\[ 0 \to N \to N \oplus \Z \to \Z \to 0 \]
The $\Z$ corresponds to the quotient, which for us will be $\p^1$. By construction there will only be two special fibers, over $0$ and $\infty$. We denote by $s$ the section $N \oplus \Z \to N$ and by $\phi$ the map $N \oplus \Z \to \Z$. For a cone $\sigma \in \Sigma$ we get a polyhedral divisor with tailcone $\sigma \cap N$ and coefficient $s(\sigma \cap \phi^{-1}(1))$ over $[0]$ and coefficient $s(\sigma \cap \phi^{-1}(-1))$ over $[\infty]$.

We consider the vector space $V=\Q^{n+1} = \Z^{n+1} \otimes \Q$ with basis $v_1,...,v_{n+1}$ and denote the last coordinate hyperplane by $H= \{ v = \sum t_iv_i | t_{n+1}=0 \}$ and $H_{\geq 0}= \{ v = \sum t_iv_i | t_{n+1} \geq 0 \}$, $H_{> 0}= \{ v = \sum t_iv_i | t_{n+1} > 0 \}$ and similarly for $H_{\leq 0}, H_{<0}$.

\begin{lemma} \label{lemma:downgradecycle}
A cone $\sigma \in \Sigma$ of dimension $n-k+1$ corresponds to 
\begin{enumerate}
    \item an element of $R_k$ if and only if $\sigma \subset H$.
    \item an element of $V_k$ if and only if $\sigma \subset H_{\leq 0}$ or $\sigma \subset H_{\geq 0}$, but $\sigma$ is not contianed in $H$.
    \item an element of $T_k$ if and only if $\sigma$ intersects both $H_{>0}$ and $H_{<0}$.
\end{enumerate}
\end{lemma}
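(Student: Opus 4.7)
Since the three conditions partition the cones $\sigma \in \Sigma$ of dimension $n-k+1$ according to their position relative to $H$, it suffices to prove the three ``if'' directions. The key combinatorial input, read off from the discussion after Definition~\ref{definition:markedfansy} combined with the description of the p-divisor associated to each cone of $\Sigma$, is a concrete criterion for membership in $K$: a cone $\tau$ of the tailfan lies in $K$ if and only if there exists some $\sigma' \in \Sigma$ with $\sigma' \cap H = \tau$ and $\sigma'$ intersecting both $H_{>0}$ and $H_{<0}$, so that both coefficients $s(\sigma' \cap \phi^{-1}(\pm 1))$ are nonempty. Note that in this situation $\dim \sigma' = \dim \tau + 1$, since $H$ properly cuts $\sigma'$.

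Case (3) is direct: if $\sigma$ intersects both $H_{>0}$ and $H_{<0}$, then $\tau := \sigma \cap H$ is a cone of the tailfan of dimension $n-k$ lying in $K$ (witnessed by $\sigma$ itself), so $\tau$ is an element of $T_k$. Cases (1) and (2) both rest on the following geometric observation, the heart of the proof: if $\sigma' \in \Sigma$ intersects both $H_{>0}$ and $H_{<0}$, then $H$ crosses the relative interior of $\sigma'$, so $\sigma' \cap H$ contains a relative interior point of $\sigma'$; equivalently, the minimal face of $\sigma'$ containing $\sigma' \cap H$ is $\sigma'$ itself. This follows from convexity: the relative interior of $\sigma'$ is dense in $\sigma'$ and hence meets both open half-spaces, and the open segment between two such interior points lies in the interior and must cross $H$.

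With this observation, case (1) is immediate: if $\sigma \subset H$ then $\sigma$ is a cone of the tailfan of dimension $n+1-k$, and supposing $\sigma \in K$ produces some $\sigma' \in \Sigma$ (necessarily of dimension $n-k+2$) such that $\sigma' \cap H = \sigma$ and $\sigma'$ meets both half-spaces. The fan axiom then forces $\sigma = \sigma \cap \sigma'$ to be a face of $\sigma'$, which by the observation must equal $\sigma'$, contradicting $\dim \sigma < \dim \sigma'$. For case (2), assume $\sigma \subset H_{\geq 0}$ with $\sigma \not\subset H$ (the opposite half-space case is symmetric). Then $F := s(\sigma \cap \phi^{-1}(1))$ is a face of $S_0$ of dimension $n-k$ with tailcone $\tau := \sigma \cap H$, and one must verify $\tau \notin K$. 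If $\tau \in K$ were witnessed by some $\sigma'$, then $\sigma \cap \sigma'$ would be a face of $\sigma'$ containing $\tau$, which by the observation equals $\sigma'$; but then $\sigma' \subset \sigma \subset H_{\geq 0}$, contradicting that $\sigma'$ meets $H_{<0}$.

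The main obstacle is stating the geometric observation with enough care to cover general, possibly non-simplicial cones $\sigma'$; once it is available, each of the three cases reduces to a short application of the fan axiom that $\sigma \cap \sigma'$ is a common face of both.
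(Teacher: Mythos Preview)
Your proof is correct and follows the same three-case structure as the paper. The paper's argument is more terse: in each case it simply computes the p-divisor $\D_\sigma$ attached to $\sigma$, reads off which coefficients are empty, and concludes directly from this whether the tailcone lies in $K$. For instance, in case~(1) the paper writes that since $\D_\sigma$ has $\emptyset$ over both $[0]$ and $[\infty]$, the tailcone ``will not be an element of $K$'', and similarly in case~(2).

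Your version is more careful on exactly this point. The criterion for $K$-membership asks for the existence of \emph{some} p-divisor with the given tailcone and no empty coefficients, so showing that $\D_\sigma$ itself has an empty coefficient does not by itself exclude another $\sigma'\in\Sigma$ with $\sigma'\cap H$ equal to the same tailcone and $\sigma'$ meeting both open half-spaces. Your convexity observation (that $\sigma'\cap H$ meets the relative interior of $\sigma'$ whenever $\sigma'$ crosses $H$) combined with the fan axiom $\sigma\cap\sigma'\preceq\sigma'$ is precisely what rules out such a witness, and hence what makes the assertions in cases~(1) and~(2) airtight. The paper presumably regards this step as routine; you have made it explicit.
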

\begin{proof}
If $\sigma \subset H$ then wee see that the tailcone of the associated polyhedral divisor is $\sigma$. Moreover we see that this divisor will have $\emptyset$ as coefficient over $[0]$ and $[\infty]$, thus it will not be an element of K.

If $\sigma \subset H_{\leq 0}$ then the coefficient of $[0]$ will be empty thus the tailcone will not lie in $K$. Moreover the coefficient over $[\infty]$ will have dimension $n-k$ as it will equal a compact polyhedron with vertices corresponding to generators having strictly positive last coordiante plus the tailcone which corresponds to rays with zero last coordinate.

If $\sigma$ intersects both $H_{>0}$ and $H_{<0}$, then first of all we see that there will be no $\emptyset$ coefficients. Moreover the tailcone will be the intersection of $\sigma$ with $H$, in particular it will have dimension $n-k$.

Since any cone $\sigma$ belongs to only one of the three categories the only if statements follow as well.
\end{proof}

\begin{corollary} \label{corollary:downgrade}
For a downgraded toric variety the exact sequence of Theorem \ref {theorem:exactsequence} equals the exact sequence
\[   \oplus_{\sigma \in \Sigma(n-k)} M(\sigma) \to   \oplus_{\sigma \in \Sigma(n+1-k)} \Z  \to A_k(X) \to 0 \]
from \cite{FS}.
\end{corollary}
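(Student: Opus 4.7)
The plan is to exhibit an isomorphism of complexes between the two sequences, matching generators with generators and relations with relations; both then present $A_k(X_\Sigma)$. I would proceed in three steps.

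First, identify the middle terms using Lemma \ref{lemma:downgradecycle}. The lemma yields a bijection between $\Sigma(n+1-k)$ and $R_k \sqcup V_k \sqcup T_k$ that assigns each cone $\sigma$ to the same underlying $T$-invariant $k$-cycle of $X_\Sigma$: namely $B_\sigma$ when $\sigma \subset H$, a fiber face $Z_{p,F}$ when $\sigma$ lies in exactly one closed half-space, and the contraction image $W_\tau$ of the tailcone $\tau = \sigma \cap H$ when $\sigma$ crosses $H$. This identifies $\Z^{V_k} \oplus \Z^{R_k} \oplus \Z^{T_k}$ with $\bigoplus_{\sigma \in \Sigma(n+1-k)} \Z$.

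Second, apply the same lemma at level $k+1$ to match the source terms. Each $\sigma \in \Sigma(n-k)$ contributes the character lattice $\sigma^\perp \cap (M \oplus \Z)$ on the Fulton-Sturmfels side, and contributes $M(\sigma) \oplus \Z^P/\Z$, $M(F)$, or $M(\tau)$ in the theorem's sequence depending on its case. Since $P = \{[0],[\infty]\}$, we have $\Z^P/\Z \cong \Z$. In the case $\sigma \subset H$, the identification $\sigma^\perp \cap (M \oplus \Z) = M(\sigma) \oplus \Z$ is immediate. In the half-space case, $F$ is the toric cone of $V(\sigma)$ inside the relevant fiber component, so $M(F)$ agrees with $\sigma^\perp \cap (M \oplus \Z)$. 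In the crossing case, $V(\sigma) = W_\tau$ has character lattice $M(\tau)$, which again matches.

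Finally, check that the two left-hand maps agree on each summand. By \cite[Theorem 1]{FMSS}, invoked in the proof of Theorem \ref{theorem:exactsequence}, both sets of relations arise as divisors of $T$-eigenfunctions on invariant $(k+1)$-cycles; Steps 1 and 2 identify the eigenfunction groups and the basis of invariant $k$-cycles between the two presentations, so the two divisor maps must coincide summand by summand. The main obstacle is numerical bookkeeping: the theorem's map contains the multiplicities $\mu(v_F)$ and $s_\sigma$ from Lemma \ref{lemma:contractedSubvar}, while Fulton-Sturmfels uses only the primitive generator $v_{\tau,\sigma} \in N(\tau)/N(\sigma)$. Showing these yield the same integer coefficients reduces to an index comparison in the style of Lemma \ref{lemma:latticeIndex}, together with an explicit description of the relevant stabilizer groups in the toric downgrade.
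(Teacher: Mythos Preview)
Your approach is essentially the same as the paper's: both invoke Lemma \ref{lemma:downgradecycle} to match generators, and both single out the key observation that for $\tau \in R_{k+1}$ the factor $\Z^P/\Z \cong \Z$ (since $|P|=2$) supplies precisely the relation coming from the last coordinate of $M \oplus \Z$. The paper's argument is a one-sentence sketch and does not spell out the multiplicity bookkeeping you flag in Step~3; your more careful treatment of $\mu(v_F)$ and $s_\sigma$ is warranted but does not depart from the paper's route.
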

\begin{proof}
This follows from the above lemma, together with the fact that that for $\tau \in R_k$ the relation coming from $\Z^P/\Z$, when the number of special fibers is two, is the same as the relation coming from the last factor of the torus corresponding to $N \oplus \Z$, since both are saying that the fibers over $[0]$ and $[\infty]$ of the corresponding $\p^1$ are equivalent.
\end{proof}

In particular, we see from the above that the number of elements of individual $R_k,V_k,T_k$ can vary a lot, depending on which subtorus we choose, even if their sum is constant, equal to the number of cones in $\Sigma$ of dimension $n-k+1$.

\begin{example} \label{example}
Let the fan for $\p^2$ be given from rays $\rho_1 = (1,0), \rho_2 = (0,1), \rho_0= (-1,-1)$ and denote the associated divisors by $D_i$. Let $\E= D_1 \oplus 0$ and $\F = (D_1+D_2) \oplus D_0$. Then $X=\p(\E) \simeq \p(\F)$ and it is a toric variety, however the different choices for $\E$ and $\F$ corresponds to different $T$-structures on $X$, giving different polyhedral subdivisions and marked cones defining it as a $T$-variety, see Figures \ref{figure:bundloeoverp21} and \ref{figure:bundloeoverp22}.

For $\p(\E)$ the only marked cones are the ones containing the ray $(1,0)$, while for $\p(\F)$ all non-zero cones are marked. This is both an example of a toric downgrade, as well as an example of a toric vector bundle (see the next section).
\end{example}

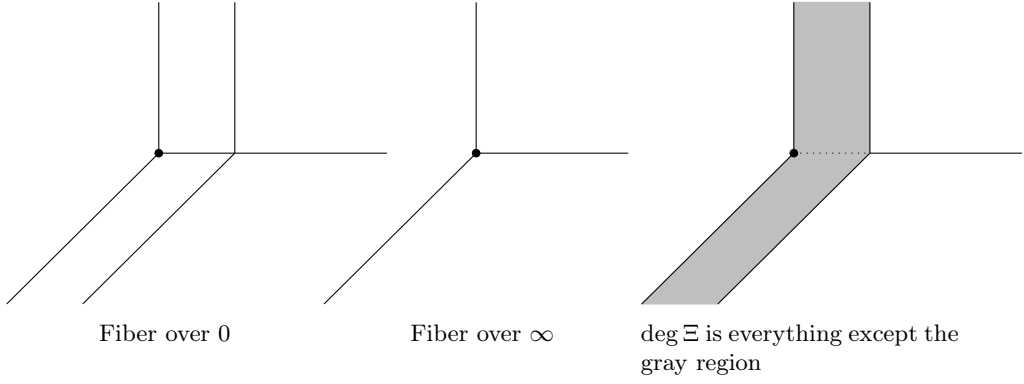
\begin{figure} 
\begin{subfigure}[t]{.33\linewidth}
\begin{tikzpicture} 
\node[draw,circle,inner sep=1pt,fill] at (0,0) {};
\draw [-] (0,0) -- (1,0);
\draw [-] (1,0) -- (3,0);
\draw [-] (1,0) -- (1,2);
\draw [-] (0,0) -- (0,2);
\draw [-] (0,0) -- (-2,-2);
\draw [-] (1,0) -- (-1,-2);
\end{tikzpicture}
\subcaption*{Fiber over $0$}
\end{subfigure}%
\begin{subfigure}[t]{.33\linewidth}
\begin{tikzpicture}
\node[draw,circle,inner sep=1pt,fill] at (0,0) {};
\draw [-] (0,0) -- (1,0);
\draw [-] (1,0) -- (2,0);
\draw [-] (0,0) -- (0,2);
\draw [-] (0,0) -- (-2,-2);
\end{tikzpicture}
\subcaption*{Fiber over $\infty$}
\end{subfigure}%
\begin{subfigure}[t]{.33\linewidth}
\begin{tikzpicture}
\draw [lightgray, fill=lightgray] (1,0) -- (-1,-2) -- (-2,-2) -- (0,0) -- (1,0);
\draw [lightgray, fill=lightgray] (1,0) -- (1,2) -- (0,2) -- (0,0) -- (1,0);
\node[draw,circle,inner sep=1pt,fill] at (0,0) {};
\draw [dotted] (0,0) -- (1,0);
\draw [-] (1,0) -- (3,0);
\draw [-] (1,0) -- (1,2);
\draw [-] (0,0) -- (0,2);
\draw [-] (0,0) -- (-2,-2);
\draw [-] (1,0) -- (-1,-2);
\end{tikzpicture}
\subcaption*{$\deg \Xi$ is everything except the gray region}
\end{subfigure}
\caption{Let $\E = D_1 \oplus 0$ on $\p^2$. The associated $T$-variety only has one special fiber over $0$.}
 \label{figure:bundloeoverp21}
\end{figure}

\begin{figure}
\begin{subfigure}[t]{.33\linewidth}
\begin{tikzpicture} [scale=0.8]
\node[draw,circle,inner sep=1pt,fill] at (0,0) {};
\draw [-] (0,0) -- (1,0);
\draw [-] (1,0) -- (3,0);
\draw [-] (1,0) -- (0,1);
\draw [-] (0,0) -- (0,1);
\draw [-] (0,1) -- (0,3);
\draw [-] (0,1) -- (-2,3);
\draw [-] (1,0) -- (3,-2);
\draw [-] (0,0) -- (-2,-2);
\end{tikzpicture}
\subcaption*{Fiber over $0$}
\end{subfigure}%
\begin{subfigure}[t]{.33\linewidth}
\begin{tikzpicture} [scale=0.8]
\node[draw,circle,inner sep=1pt,fill] at (0,0) {};
\draw [-] (0,0) -- (2,0);
\draw [-] (0,0) -- (0,2);
\draw [-] (-1,-1) -- (-3,1);
\draw [-] (-1,-1) -- (1,-3);
\draw [-] (0,0) -- (-1,-1);
\draw [-] (-1,-1) -- (-3,-3);
\end{tikzpicture}
\subcaption*{Fiber over $\infty$}
\end{subfigure}%
\begin{subfigure}[t]{.33\linewidth}
\begin{tikzpicture} [scale=0.8]
\draw [lightgray, fill=lightgray] (1,0) -- (0,1) -- (-1,0) -- (-1,-1) -- (0,-1) -- (1,0);
\node[draw,circle,inner sep=1pt,fill] at (0,0) {};
\draw [-] (1,0) -- (0,1);
\draw [-] (0,1) -- (-1,0);
\draw [-] (-1,0) -- (-1,-1);
\draw [-] (-1,-1) -- (0,-1);
\draw [-] (0,-1) -- (1,0);
\draw [-] (1,0) -- (3,0);
\draw [-] (0,-1) -- (2,-3);
\draw [-] (0,1) -- (0,3);
\draw [-] (-1,0) -- (-3,2);
\draw [-] (-1,-1) -- (-3,-3);

\draw [dotted] (0,0) -- (1,0);
\draw [dotted] (0,0) -- (-1,0);
\draw [dotted] (0,0) -- (0,1);
\draw [dotted] (0,0) -- (0,-1);

\end{tikzpicture}
\subcaption*{$\deg \Xi$ is everything except the gray region}
\end{subfigure}
\caption{Let $\F = (D_1+D_2) \oplus D_0$ on $\p^2$. Then $\p(\F)$ is isomorphic to $\p(\E)$, however the different choice of $T$-structure gives different data as $T$-varieties.}
 \label{figure:bundloeoverp22}
\end{figure}
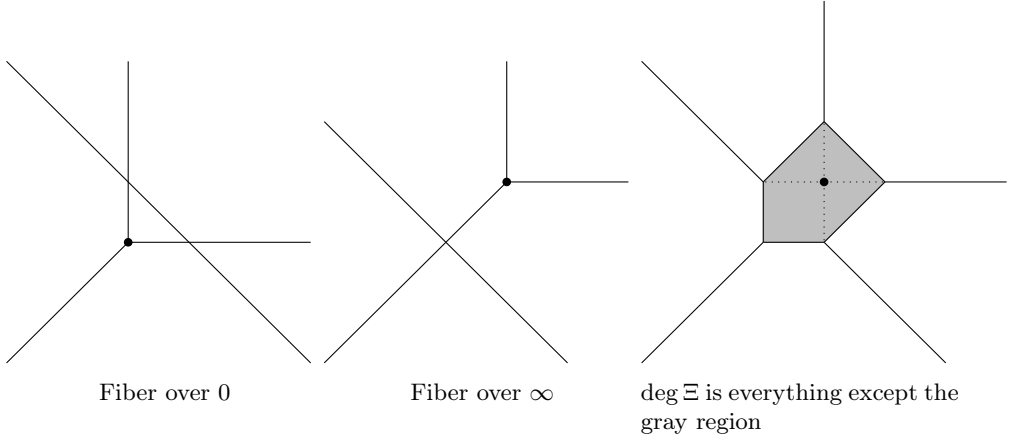

\section{Toric Vector bundles as T-varieties} \label{section:tvb}

A well-studied example of T-varieties are toric vector bundles. Fix a toric variety $X_\Sigma$. A vector bundle $\E$ on $X_\Sigma$ is called toric if there is a T-action on the geometric vector bundle which is compatible with the action on $X_\Sigma$ and linear on the fibers of $\E$. Toric vector bundles where classified by Klyachko \cite{Klyachko}: they correspond to, for each ray, a filtration of the fiber $E$ over the identity of the torus, indexed by integers.

Given an indecomposable toric vector bundle of rank $r+1$, the projectivization $\p(\E)$ can be considered a T-variety of complexity $r$ (if it decomposes then the complexity is lower). If $\E$ splits as a sum of line bundles then $\p(\E)$ is in fact a toric variety and can thus be described as a complexity one $T$-variety via downgrading. 

Fix now a smooth toric variety $X_\Sigma$, a toric vector bundle $\E$ on $X_\Sigma$ and fix a maximal cone $\sigma$ in $\Sigma$. Any vector bundle on an affine toric variety splits as a sum of line bundles which again implies that $\p(\E|_{U_\sigma})$ is a toric variety \cite[p.31]{Oda}. We can describe $\p(\E|_{U_\sigma})$ as a T-variety via downgrading the torus action to only remember the action on the base. The description of $\p(\E)$ as a T-variety will locally be glued from  such pieces.

This description of rank two toric vector bundles as T-varieties are given in \cite[Proposition 8.4]{AHS}. Fix a maximal cone $\sigma$. Then $\E|_{U_\sigma} = \OO(u_1) \oplus \OO(u_2)$, where $u_1,u_2$ are characters of the torus. Define the four polyhedra
\[ \Delta_1 = \{ v \in N_\Q | \langle u_1-u_2,v \rangle \geq 1 \} \cap \sigma \]
\[ \Delta_2 = \{ v \in N_\Q | \langle u_2-u_1,v \rangle \geq 1 \} \cap \sigma \]
\[ \nabla_1 = \{ v \in N_\Q | \langle u_1-u_2,v \rangle \leq 1 \} \cap \sigma \]
\[ \nabla_2 = \{ v \in N_\Q | \langle u_2-u_1,v \rangle \leq 1 \} \cap \sigma \]
Fix $v_1,v_2 \in E$ corresponding to the $1$-dimensional vector spaces in the Klyachko-filtration for $\E$ on $\sigma$. If there are no two distinct such spaces, simply choose any vectors such that $v_1,v_2$ is a basis, preferably $v_i$ whose span appears in filtration for other rays (this minimizes the number of special fibers). Then 
\[ \D_1= \Delta_1 \otimes v_1 + \nabla_2 \otimes v_2 \]
\[ \D_2= \nabla_1 \otimes v_1 + \Delta_2 \otimes v_2 \]
are polyhedral divisors describing $\p(\E|_{U_\sigma})$. Here $v_i$ is considered a point in $\p^1=\p(E)$. Thus the special fibers $P$ correspond to the distinct one-dimensional subspaces appearing in the Klyachko filtrations.

The description above enables us to write out which cycles get contracted by the map $r: \widetilde{X} \to X$. For a cone $\sigma$ there are essentially three different cases, corresponding to whether there are no one-dimensional linear spaces in the filtrations on the rays of $\sigma$, or only one distinct one-dimensional linear space, or if there are two different one-dimensional linear spaces (there cannot be more, by the compatibility condition for a toric vector bundle).

If $u_1=u_2$ (meaning that there are no one-dimensional spaces) then $\sigma$ itself is a cone of the tailfan and both polyhedral divisors have $\emptyset$ as a coefficient, hence $\sigma$ is non-contracted, hence no face of $\sigma$ will be contracted.

If $u_1 \geq u_2$, but $u_1 \neq u_2$ on $\sigma$ (meaning there is one distinct one-dimensional space) we have that $\D_1$ has tailcone $\sigma$ and no empty coefficients, thus $\sigma$ will be contracted. We see that $\Delta_2$ is $\emptyset$. Let $u_1 = (x_1,...,x_n)$ and $u_2=(y_1,...,y_n)$ and assume $x_i-y_i>0$ for $i=1,...,s$ and $=0$ for $i=s+1,...,n$. Then 
\[ \nabla_1 = \{ (a_1,...,a_n) \in N_\Q | a_i \geq 0, \sum_{i=0}^s a_i(x_i-y_i)  \leq 1 \} \]
We see that $\nabla_1=\tau+P$ where $\tau$ is the $n-s$-dimensional cone generated by $e_{s+1},...,e_n$ (where $\sigma = \Cone(e_1,...e_n)$) and $P$ is the $s$-dimensional simplex defined by 
\[ P = \{ a_i \geq 0, \sum_{i=0}^s a_i(x_i-y_i) \leq 1 \} \]
For a face $\tau$ of $\sigma$ we have that $\tau$ is contracted if and only if $\deg \D_1 \cap \tau \neq \emptyset$. We see that 
\[ \deg \D_1= \{ (a_1,...,a_n) \in N_\Q | a_i \geq 0, \sum_{i=0}^s a_i(x_i-y_i)  \geq 1 \} \]
A face $\tau$ of $\sigma$ is given by a arbitrary subset $S$ of $\{1,...,n\}$:
\[ \tau = \Cone(e_i | i \in S \subset \{ 1,...,n \}) = \{ (a_1,...,a_n) \in N_\Q | a_i \geq 0, a_i = 0 \text{ for } i \notin I \} \]
We see that $\tau$ is not contracted if and only if $S \subset \{s+1....,n \}$.

The last case is if there exists at least one component such that $u_1-u_2$ is positive and at least one component such that it is negative. We have that now $\D_1$ and $\D_2$ will have distincti full-dimensional tailcones $\sigma_1$ and $\sigma_2$, with $\sigma=\sigma_1 \cup \sigma_2$. There are no empty coefficients in any of the polyhedral divisors, thus all faces of $\sigma_1,\sigma_2$ are contracted. 

The above might seem quite technical and non-illuminating, however if we go to the Klyachko perspective and reformulate the above in terms of the filtrations we get the following.

\begin{proposition} \label{proposition:tanktwocycles}
Let $X \simeq \p(\E)$ be a toric vector bundle of rank two on a smooth toric variety $X_\Sigma$. Then there is a divisorial fan for $X$ with tailfan $\Sigma'$ being a subdivision of $\Sigma$. For a cone $\tau \in \Sigma'$ the corresponding orbit is not contracted by the map $\widetilde{X} \to X$ if and only if $\tau \in \Sigma$ and for any $v \in \tau(1)$ there doesn't appear a one-dimensional space in the filtration $E^v(j)$.
\end{proposition}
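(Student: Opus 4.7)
The strategy is to globalize the local description that precedes the proposition and then re-express the resulting contraction condition in terms of Klyachko filtrations.

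First I would assemble the divisorial fan. Over each maximal $\sigma \in \Sigma$, the preceding analysis yields two p-divisors $\D_1^\sigma, \D_2^\sigma$ with tailcones $\sigma_1, \sigma_2$. In Cases 1 and 2 (when $u_1 = u_2$ on $\sigma$, or when $u_1 - u_2$ has fixed sign on $\sigma$) one has $\sigma_1 \cup \sigma_2 = \sigma$ and $\sigma$ itself is one of the tailcones, so no subdivision occurs. Only in Case 3 (when $u_1 - u_2$ changes sign) is $\sigma$ strictly subdivided, by the hyperplane $\{ v \mid \langle u_1 - u_2, v \rangle = 0\}$. Taking the collection of these local tailcones yields a fan $\Sigma'$ that subdivides $\Sigma$; compatibility on an intersection $\sigma \cap \sigma'$ follows because the Klyachko filtrations are intrinsic to $\E$, so the two local splittings agree on the common face.

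Next I would read off the marking $K$ from the same analysis. Collating the three cases: in Case 3 every face of $\sigma_1$ and $\sigma_2$ lies in $K$; in Case 1 no face of $\sigma$ lies in $K$; in Case 2 a face $\tau$ of $\sigma$ lies outside $K$ exactly when every ray generator $e_i$ of $\tau$ satisfies $\langle u_1 - u_2, e_i \rangle = 0$. Combining these, a cone $\tau \in \Sigma'$ is non-contracted if and only if (a) $\tau \in \Sigma$ (which rules out the refined cones produced in Case 3) and (b) for some, equivalently every, maximal $\sigma \supseteq \tau$ with local splitting $\E|_{U_\sigma} = \OO(u_1) \oplus \OO(u_2)$ one has $\langle u_1 - u_2, v \rangle = 0$ for every $v \in \tau(1)$.

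Finally I would translate condition (b) into Klyachko language. Writing $L_1, L_2 \subset E$ for the fibers of the two summands on $\sigma$, the filtration at a ray $v \in \sigma(1)$ takes the form
\[ E^v(j) = \bigoplus_{i \,:\, \langle u_i, v\rangle \geq j} L_i. \]
Consequently a one-dimensional subspace appears in the filtration $E^v$ if and only if $\langle u_1, v \rangle \neq \langle u_2, v \rangle$, i.e.\ if and only if $\langle u_1 - u_2, v \rangle \neq 0$. Thus (b) is exactly the assertion that no one-dimensional space appears in $E^v$ for any $v \in \tau(1)$, which together with (a) yields the proposition.

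The main technical subtlety is checking that (b) is independent of the choice of maximal cone $\sigma \supseteq \tau$, and relatedly that the local p-divisors glue to a bona fide divisorial fan on $X$; both reduce to the standard fact that the Klyachko filtrations of a toric vector bundle glue consistently along faces, which is built into Klyachko's classification.
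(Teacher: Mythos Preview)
Your proposal is correct and follows essentially the same route as the paper: both deduce the contraction criterion from the local case analysis preceding the proposition and then translate the condition $\langle u_1-u_2,v\rangle=0$ into the Klyachko statement that no one-dimensional space appears in $E^v$. The paper's own proof is a single sentence (``this is just a reformulation of the above, when we recall that the condition $u_i=z_i$ is equivalent to $E^{v_i}(j)$ jumps directly from $0$ to $E$''), so you have simply made explicit the steps it leaves implicit, including the gluing of the local divisorial fans and the well-definedness of condition (b) across different maximal cones.
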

\begin{proof}
This is just a reformulation of the above, when we recall that the condition $u_i=z_i$ is equivalent to $E^{v_i}(j)$ jumps directly from $0$ to $E$.
\end{proof}

\begin{example}
The $T$-variety in Example \ref{example:bunldep1p1} can be obtained as follows. Start with $\p^1 \times \p^1$, considered as the toric variety with rays $\pm e_i, i=1,2$ and consider a rank two toric vector bundle on it. The filtrations are given by 
 \begin{align*} E^{e_1}(j) = \begin{cases} 
E &\text{ if } j \leq 0 \\
[0] &\text{ if } 0 < j \leq 1 \\
0 &\text{ if } 1 < j
\end{cases},  E^{e_2}(j) = \begin{cases} 
E &\text{ if } j \leq 0 \\
[1] &\text{ if } 0 < j \leq 1 \\
0 &\text{ if } 1 < j
\end{cases} \end{align*}  
 \begin{align*} E^{-e_1}(j) = \begin{cases} 
E &\text{ if } j \leq 0 \\
[\infty] &\text{ if } 0 < j \leq 1 \\
0 &\text{ if } 1 < j
\end{cases}, E^{-e_2}(j) = \begin{cases} 
E &\text{ if } j \leq 0 \\
0 &\text{ if } 1 < j
\end{cases} \end{align*}  
where we by $[p]$ mean the one-dimensional vector space in $E$ for which the corresponding point in $\p(E)$ is $p$. Then one can easily check that we obtain the data given in Figure \ref{figure:bundloeoverp1p1} and \ref{figure:degbundlep1p1}.
\end{example}

Given $\E$  of rank two on $X_\Sigma$ we partition the cones $\sigma$ in $\Sigma$ into three subsets as above. For any cone $\sigma$ we denote the  characters associated to $\E|_{U_\sigma}$ by $u_1^\sigma$ and $u_2^\sigma$. We say that $\sigma \in H$ if $u_1^\sigma = u_2^\sigma$ on $\sigma$.  We say that $\sigma \in I$ if $u_1^\sigma-u_2^\sigma >0$ and $u_2^\sigma-u_1^\sigma >0$ both intersects $\sigma$. Lastly $\sigma \in J$ if $u_1^\sigma \geq u_2^\sigma$ (or $\leq$) on $\sigma$, but they are not equal. 

\begin{proposition} \label{proposition:rank2tvb}
For a rank two toric vector bundle $\E$ on the smooth toric variety $X_\Sigma$ we have that for $X=\p(\E)$ the cycles $R_k,V_k,T_k$ for $k<n$ corresponds bijectively to the following sets
\[ R_k \leftrightarrow \Sigma(n-k+1) \cap H \]
\[ V_k \leftrightarrow (\Sigma(n-k+1) \cap J) \cup (\Sigma(n-k) \cap J)  \cup 2 (\Sigma(n-k) \cap H)  \]
\[ T_k \leftrightarrow (\Sigma(n-k+1) \cap I) \cup (\Sigma(n-k) \cap J) \cup 2 (\Sigma(n-k) \cap I) \]
In particular, by summing these numbers we get $r_k+v_k+t_k = \#\Sigma(n-k+1) + 2 \#\Sigma(n-k)$ for $k<n$.

When $k=n$ we have that the correspondences for $R_n$ and $T_n$ still hold, however an element of $V_n$ corresponds to $(\Sigma(1) \cap J) \cup P$ and $r_n+v_n+t_n = \# \Sigma(1) + \#P$.
\end{proposition}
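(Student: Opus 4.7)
The plan is to do a case analysis on cones $\sigma \in \Sigma$ using the partition $H \sqcup I \sqcup J$ together with the explicit descriptions of $\D_1^\sigma, \D_2^\sigma$ from the paragraphs preceding Proposition \ref{proposition:tanktwocycles}. For each cone I will read off the contributions to the tailfan $\Sigma'$ of $X$, to the marked set $K$, and to the polyhedral complexes $S_p$ on the special fibers; the bijective correspondences of the statement will then follow by organising contributions dimension by dimension.

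First I record the local picture. If $\sigma \in H$, both $\D_1^\sigma$ and $\D_2^\sigma$ have tailcone $\sigma$ but each carries an empty coefficient (over $v_1$ and $v_2$ respectively), so $\sigma$ is unchanged in $\Sigma'$ and $\sigma \notin K$, while the unshifted cone $\sigma$ at the origin of $S_{v_1}$ and of $S_{v_2}$ furnishes two distinguished faces. If $\sigma \in J$ with $u_1^\sigma \geq u_2^\sigma$, then $\D_1^\sigma$ has tailcone $\sigma$ with no empty coefficient, so $\sigma \in K$; in addition, $\nabla_1^\sigma = \tau + Q$ appears as a face of $S_{v_1}$ of dimension $\dim \sigma$ whose tailcone $\tau$ is the largest face of $\sigma$ on which $u_1^\sigma = u_2^\sigma$ (so $\tau \in H$), together with the hyperplane cut $\Delta_1^\sigma \cap \nabla_1^\sigma$ of dimension $\dim \sigma - 1$ with the same tailcone. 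If $\sigma \in I$, the hyperplane $\{u_1^\sigma = u_2^\sigma\}$ splits $\sigma$ into two cones $\sigma_1, \sigma_2$ of dimension $\dim \sigma$, both in $K$, whose common face $\sigma_1 \cap \sigma_2$ of dimension $\dim \sigma - 1$ also lies in $K$.

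With this dictionary the correspondences are immediate. Each $\sigma \in H$ of dimension $n-k+1$ produces one element of $R_k$, yielding $R_k \leftrightarrow \Sigma(n-k+1) \cap H$. For $T_k$, contributions come from $\sigma \in J$ of dimension $n-k$ (one element, $\sigma$ itself), $\sigma \in I$ of dimension $n-k$ (two elements, $\sigma_1$ and $\sigma_2$), and $\sigma \in I$ of dimension $n-k+1$ (one element, $\sigma_1 \cap \sigma_2$). For $V_k$ with $k < n$, contributions come from $\sigma \in J$ of dimension $n-k+1$ (the hyperplane cut), $\sigma \in J$ of dimension $n-k$ (the polytope $\nabla_1^\sigma$), and $\sigma \in H$ of dimension $n-k$ (the two distinguished cone-faces in $S_{v_1}$ and $S_{v_2}$). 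The total count $r_k + v_k + t_k = \#\Sigma(n-k+1) + 2\,\#\Sigma(n-k)$ then drops out of the partition $H \sqcup I \sqcup J = \Sigma$.

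For $k = n$ the arguments for $R_n$ and $T_n$ go through verbatim, but $V_n$ requires a separate treatment because the trivial cone $\{0\}$ is exceptional: the local trivialisation $\p(\E|_{U_\sigma}) \simeq U_\sigma \times \p^1$ that restricts the contribution of $\sigma \in H$ to just $v_1, v_2$ holds for any $\sigma \in H$ of positive dimension, but breaks at $\sigma = \{0\}$ since $U_{\{0\}}$ is the dense torus itself and every $p \in P$ then gives a distinct vertex at the origin of $S_p$. Counting these together with the non-origin vertex of $\nabla_1^\rho$ for each $\rho \in \Sigma(1) \cap J$ yields $V_n \leftrightarrow (\Sigma(1) \cap J) \cup P$. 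The main obstacle is precisely justifying this dichotomy: verifying that for positive-dimensional $\sigma \in H$ the special fibers $p \notin \{v_1, v_2\}$ contribute no genuinely new face in the region of $\sigma$, so exactly two special-fiber faces arise --- whereas at the origin all $|P|$ fibers contribute independently.
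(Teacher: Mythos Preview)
Your case analysis is the paper's own: both argue by partitioning cones into $H \sqcup I \sqcup J$ and reading off contributions from the local p-divisors $\D_1^\sigma, \D_2^\sigma$ described before Proposition~\ref{proposition:tanktwocycles}. The concern you flag at the end---whether a positive-dimensional $\sigma \in H$ contributes exactly two faces to $V_k$ or one for each $p \in P$---is precisely the point the paper's proof glosses over (``there will be two different fibers \dots\ hence they contribute twice''), and it is a genuine gap rather than a technicality waiting to be filled in.

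Concretely: if $\sigma \in \Sigma(n-k) \cap H$ with $\dim\sigma>0$, then in the region of $\sigma$ every polyhedral subdivision $S_p$ agrees with the tailfan $\Sigma'$ (the local coefficients over any $p$ are trivially $\sigma$ itself). Hence the cone $\sigma$ sits as a face of $S_p$ for \emph{every} $p \in P$, not merely for two chosen $v_1, v_2$; the local description only sees the two fibers relevant to one maximal cone and misses the special fibers coming from the rest of the bundle. In the paper's own running example (Example~\ref{example:bunldep1p1}, where $|P|=3$ and $-e_2 \in \Sigma(1) \cap H$), direct inspection of Figure~\ref{figure:bundloeoverp1p1} shows the ray in direction $(0,-1)$ from the origin appears in all three special fibers; one finds $v_1 = 8$ and $r_1+v_1+t_1 = 13$, against the $7$ and $12$ predicted by the stated bijection. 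So the obstacle you isolate cannot be resolved as written: the correct contribution of $\Sigma(n-k)\cap H$ to $V_k$ is $|P|$ copies rather than two, and the summation formula picks up an extra $(|P|-2)\cdot\#(\Sigma(n-k)\cap H)$ term.
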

In particular the number of generators in the surjection (\ref{surjection}) does not depend on the bundle $\E$ except when $k=n$. 
\begin{proof}
This is an application of Lemma \ref{lemma:downgradecycle} together with the description of $\p(\E)$ as a $T$-variety, since locally a toric vector bundle is given as a toric downgrade.

The statement on $R_k$ is immediate from Proposition \ref{proposition:tanktwocycles}.

If $\sigma \in \Sigma(n-k+1) \cap J$ then $D_2^\sigma = \nabla_1 \otimes v_1 + \Delta_2 \otimes v_2$ where $\nabla_1$ is a sum $\tau+P$, where $P$ is a polytope and $\tau$ a cone not contracted by $r$, and $\Delta_2$ equals $\emptyset$. Recall that 
\[ \nabla_1 = \{ (a_1,...,a_n) \in N_\Q | a_i \geq 0, \sum_{i=0}^s a_i(x_i-y_i)  \leq 1 \} \]
The intersection of the hyperplane $\sum_{i=0}^s a_i(x_i-y_i)  \leq 1$ with $\sigma$ is a face of dimension $n-k$ contained in the relative interior of $\sigma$, with tailcone $\tau$, thus giving an element of $V_k$. If $\sigma \in \Sigma(n-k) \cap J$ then the corresponding $\nabla_1$ itself will give an element of $V_k$. If $\sigma \in \Sigma \cap H$ then there will be two different fibers, each containing $\sigma$ as a face, however both will have empty coefficients somewhere, thus these fibers are special fibers, hence they contribute twice to $V_k$.

If $\sigma \in \Sigma(n-k+1) \cap I$ then in $\Sigma'$ $\sigma$ will be subdivided by intersecting with a hyperplane intersecting the interior of $\sigma$. The intersection of $\sigma$ with this hyperplane is a cone of dimension $n-k$ which will be contracted by $r$, giving an element of $T_k$. If $\sigma \in \Sigma(n-k) \cap J$ then by the above $\sigma$ is contracted thus it defines an element of $T_k$. If $\sigma \in \Sigma(n-k) \cap I$ then in $\Sigma'$ it will be subdivided into two different cones of dimension $n-k$, both of which will be contracted by $r$.

For $k=n$ we have the short exact sequence
\[ 0 \to \Z^P/\Z \oplus M \to Z^{V_n \cup R_n} \to \Picc(\p(\E)) \to 0 \]
Implying that $\# \Sigma(1) + \# P = v_n + r_n$. Moreover we see that as above
\[ R_n \leftrightarrow \Sigma(1) \cap H \]
However an element of $V_n$ (which is simply any vertex of a special fiber) corresponds to either $\Sigma(1) \cap J$ (as above) or to the vertex $0$ which is a vertex of $S_p$ for any $p \in P$. 
\end{proof}

\begin{remark}
If $\E$ is a direct sum of line bundles, then $\p(\E)$ is itself a toric variety $X_{\Sigma'}$ and the presentation of $\p(\E)$ as a $T$-variety is simply a toric downgrade. In this case Proposition \ref{proposition:rank2tvb} follows from Corollary \ref{corollary:downgrade} and the description of the fan $\Sigma'$ \cite[Proposition 7.3.3]{CLS}. 
\end{remark}

\begin{remark}
Consider once again the varieties in Example \ref{example}. We see that for $\E$ and $\F$ the numbers $r_k,v_k,t_k$ are given by Table \ref{table}. We see that $r_k+v_k+t_k$ is independent of whether we use $\E$ or $\F$, as predicted by Proposition \ref{proposition:rank2tvb}.

\begin{table} 
\resizebox{\columnwidth}{!}{%
  \begin{tabular}{|l|l|l|l|l|l|l|l|l|l|l|l|l|}
    \hline
    & $r_2$ & $v_2$ & $t_2$ & $r_2+v_2+t_2$ & $r_1$ & $v_1$& $t_1$ & $r_1+v_1+t_1$ & $r_0$& $v_0$& $t_0$ & $r_0+v_0+t_0$ \\
    \hline
    $\p(\E)$ & 3 & 0 & 2 &5 & 7 & 1 & 1 & 9 & 4 & 2 & 0  & 6\\
    \hline
    $\p(\F)$ & 5 & 0 & 0 & 5& 4 & 5 & 0 & 9 & 1 &  5 & 0   & 6\\
    \hline
  \end{tabular}%
}
  \caption{The numbers $r_i,v_i,t_i$ for Example \ref{example}}
  \label{table}
\end{table}
\end{remark}

\bibliography{ref}
\bibliographystyle{alpha}
\end{document}